\newtheorem{thm}{Theorem}[section]
\newtheorem{lem}[thm]{Lemma}
\newtheorem{prop}[thm]{Proposition}
\theoremstyle{definition}
\newtheorem{rem}{Remark}[section]
\numberwithin{equation}{section}
\begin{document}
\title[Two-species chemotaxis system with two chemicals involving flux-limitation]
{Critical blow-up curve in a two-species chemotaxis system with two chemicals involving flux-limitation}

\author[Zeng]{Ziyue Zeng}%
\address{Z. Zeng: School of Mathematics, Southeast University, Nanjing 211189, P. R. China}
\email{ziyzzy@163.com}

\author[Li]{Yuxiang Li$^{\star}$}
\address{Y. Li: School of Mathematics, Southeast University, Nanjing 211189, P. R. China}
\email{lieyx@seu.edu.cn}

\subjclass[2010]{35B44, 35B33, 35K57, 35K59, 35Q92, 92C17.}%

\keywords{Two-species chemotaxis system; two chemicals; flux-limitation; critical blow-up curve; finite time blow up; global boundedness.}
\thanks{$^{\star}$Corresponding author}

\begin{abstract}
We investigate the following two-species chemotaxis system with two chemicals involving flux-limitation
\begin{align}\tag{$\star$}
\begin{cases}
u_t = \Delta u -  \nabla \cdot \left(u(1+|\nabla v|^2)^{-\frac{p}{2}}\nabla v\right), & x \in \Omega, \ t > 0, \\ 
0 = \Delta v - \mu_w + w, \quad \mu_{w}=f_{\Omega} w, & x \in \Omega, \ t > 0, \\ 
w_t = \Delta w -  \nabla \cdot \left(w (1+|\nabla z|^2)^{-\frac{q}{2}} \nabla z\right), & x \in \Omega, \ t > 0, \\ 
0 = \Delta z - \mu_u + u, \quad \mu_{u}=f_{\Omega} u, & x \in \Omega, \ t > 0, \\
\frac{\partial u}{\partial \nu} = \frac{\partial v}{\partial \nu} = \frac{\partial w}{\partial \nu} = \frac{\partial z}{\partial \nu} = 0, & x \in \partial \Omega, \ t > 0, \\
u(x, 0) = u_0(x), \quad w(x, 0) = w_0(x), & x \in \Omega,
\end{cases}
\end{align}
where $p,q \in \mathbb{R}$ and $\Omega \subset \mathbb{R}^n$ is a smooth bounded domain. In this paper, we identify a critical blow-up curve for system ($\star$) with $n\geq 3$. If $p<\frac{n-2}{n-1}$ and $q<\frac{n-2}{n-1}$, and $\Omega=B_R(0) \subset \mathbb{R}^n$ with $n\geq 3$, there exist radially symmetric initial data such that the corresponding solution blows up in finite time; if either $p>\frac{n-2}{n-1}$ or $q>\frac{n-2}{n-1}$ with $n\geq 2$, then solutions exist globally and remain bounded.
\end{abstract}
\maketitle

\section{Introduction}
In this paper, we investigate the two-species chemotaxis system with two chemicals involving flux-limitation
\begin{align}\label{eq1.1.0}
\begin{cases}
u_t = \Delta u -  \nabla \cdot \left(uf(|\nabla v|^2)\nabla v\right), & x \in \Omega, \ t > 0, \\ 
0 = \Delta v - \mu_w + w, \quad \mu_{w}=f_{\Omega} w, & x \in \Omega, \ t > 0, \\ 
w_t = \Delta w -  \nabla \cdot \left(w g(|\nabla z|^2) \nabla z\right), & x \in \Omega, \ t > 0, \\ 
0 = \Delta z - \mu_u + u, \quad \mu_{u}=f_{\Omega} u, & x \in \Omega, \ t > 0,\\
\frac{\partial u}{\partial \nu} = \frac{\partial v}{\partial \nu} = \frac{\partial w}{\partial \nu} = \frac{\partial z}{\partial \nu} = 0, & x \in \partial \Omega, \ t > 0, \\
u(x, 0) = u_0(x), \quad w(x, 0) = w_0(x), & x \in \Omega,
\end{cases}
\end{align}
where $\Omega$ is a smooth bounded domain, $f(|\nabla v|^2)=(1+|\nabla v|^2)^{-\frac{p}{2}}$ and $g(|\nabla z|^2)=(1+|\nabla z|^2)^{-\frac{q}{2}}$. Unlike the classical Keller-Segel system, system (\ref{eq1.1.0}) exhibits a circular interaction structure. The sensitivity functions $f(|\nabla v|^2)$ and $g(|\nabla z|^2)$ describe the response to the gradients of $v$ and $z$, respectively. We refer readers to \cite{2010-MMMAS-BellomoBellouquidNietoSoler, 2022-MPRIA-Zhigun, 2020-RMI-PerthameVaucheletWang} for detailed biological backgrounds of the Keller-Segel system involving flux limitation. The goal of the present work is to identify the critical blow-up curve for system (\ref{eq1.1.0}).

The classical chemotaxis system \cite{1971-JTB-KellerSegel,1971-Jotb-KellerSegel,2002-CAMQ-PainterHillen}, involving one species and one chemical,
\begin{align}\label{eq1.1.2}
\begin{cases}
u_t=\nabla \cdot(D(u) \nabla u)-\nabla \cdot(S(u) \nabla v) ,& x \in \Omega, t>0, \\
0=\Delta v-v+u , & x \in \Omega, t>0, \\
\frac{\partial u}{\partial \nu} = \frac{\partial v}{\partial \nu} = 0, & x \in \partial \Omega, \ t > 0, \\
u(x, 0) = u_0(x), \quad v(x, 0) = v_0(x), & x \in \Omega,
\end{cases}
\end{align}
has been shown to possess the following properties.
\begin{itemize}
 \item \textbf{Critical mass phenomenon}. Let $m:=\int_{\Omega} u_0 \mathrm{d}x$. When $n=2$, $D(\xi)=1$ and $S(\xi)=\xi$, under the radially symmetric assumption, Nagai \cite{1995-AMSA-Nagai} proved that, when $m < 8\pi$, the solution remains uniformly bounded; when $m > 8\pi$, there exist initial data with small second moment $\int_{\Omega} u_0|x|^2 \mathrm{d} x$ that lead to finite-time blow-up solutions. Subsequently, Nagai \cite{2001-JIA-Nagai} extended the results to the nonradial case, and showed that, either $q \in \Omega$ and $m > 8\pi$ or $q \in \partial \Omega$ and $m > 4\pi$, if $\int_{\Omega} u_0|x-q|^2 \mathrm{d} x$ is sufficiently small, then the solution blows up in finite time. Related results for the parabolic-parabolic system (\ref{eq1.1.2}) can be found in  \cite{1997-FE-NagaiSenbaYoshida, 2001-EJAM-HorstmannWang, 
     2014--MizoguchiWinkler}.
 \item \textbf{Critical blow-up exponents phenomenon}. For system (\ref{eq1.1.2}) with $D(\xi)=(\xi+1)^p$ and $S(\xi)=\xi(\xi+1)^{q-1}$, Lankeit \cite{2020-DCDSSS-Lankeit} demonstrated that if $q-p<\frac{2}{n}$, solutions exist globally and remain bounded; if $q-p>\frac{2}{n}$, there exist radially symmetric solutions that become unbounded  either in finite time or infinite time; if $q\leq 0$, solutions are global. Similar results regarding the parabolic–parabolic system (\ref{eq1.1.2}) can be found in \cite{2010-MMAS-Winkler,2012-JDE-TaoWinkler,2012-JDE-CieslakStinner,2014-AAM-CieslakStinner,2015-JDE-CieslakStinner,2019-JDE-Winkler}.
\end{itemize}

The system (\ref{eq1.1.2}) with Jäger-Luckhaus form \cite{1992-TAMS-JaegerLuckhaus}
\begin{align}\label{eq1.1.1}
\begin{cases}
u_t=\nabla \cdot(D(u) \nabla u)-\nabla \cdot(S(u) \nabla v), & x \in \Omega, t>0, \\ 
0=\Delta v-\mu+u, \quad \mu=f_{\Omega} u, & x \in \Omega, t>0 , \\
 \frac{\partial u}{\partial \nu}=\frac{\partial v}{\partial \nu}=0, & x \in \partial \Omega, t>0, \\
u(x, 0) = u_0(x), & x \in \Omega.
\end{cases}
\end{align}
has been proved to possess the same critical mass phenomenon when $n=2$ as system (\ref{eq1.1.2}), which was demonstrated by Nagai in references \cite{1995-AMSA-Nagai, 2001-JIA-Nagai}. For system (\ref{eq1.1.1}) with $D(\xi)=(\xi+1)^p$ and $S(\xi)=\xi(\xi+1)^{q-1}$, Winkler and Djie \cite{2010-NA-WinklerDjie} showed that, if $q-p<\frac{2}{n}$, all solutions exist globally and remain bounded; if $q-p>\frac{2}{n}$ and $q>0$, under the radially symmetric assumption, there exist solutions that become unbounded in finite time.

The system with indirect signal production
\begin{align}\label{ind}
\begin{cases}
u_t=\nabla \cdot(D(u) \nabla u)-\nabla \cdot(S(u) \nabla v)-\kappa_1 u+\kappa_2 w, & x \in \Omega, t>0 \\ 
0=\Delta v-\mu_{w}(t)+w, \quad \mu_{w}(t)=f_{\Omega} w, & x \in \Omega, t>0 \\ 
w_t=\Delta w-\lambda_1 w+\lambda_2 u & x \in \Omega, t>0 \\ \frac{\partial u}{\partial \nu}=\frac{\partial v}{\partial \nu}=\frac{\partial w}{\partial \nu}=0, & x \in \partial \Omega, t>0 \\ u(x, 0)=u_0(x), \quad w(x, 0)=w_0(x), & x \in \Omega,
\end{cases}
\end{align}
where $D(\xi) \simeq \xi^{p}$ and $ S(\xi) \simeq \xi^q$ ($\xi \gg 1$), has also been shown to have two critical blow-up lines when $n \geq 3$, as identified by Tao and Winkler \cite{2025-JDE-TaoWinkler}. When $q-p>\frac{4}{n}$ and $q>\frac{2}{n}$, there exist radially symmetric initial data that lead to finite-time blow-up solutions; when $q-p<\frac{4}{n}$, the solutions are globally bounded; when $q<\frac{2}{n}$, solutions are global. They detected the blow-up by constructing subsolutions that become singular in finite time. Later, these subsolutions have also been used to determine the critical nonlinearity for blow-up in a chemotaxis system with indirect signal production in \cite{2024--Zhao}.

Considering chemotaxis systems with flux limitation,
\begin{align}\label{1.0.5}
  \left\{
  \begin{array}{ll}
    u_t =\Delta u-\nabla \cdot (uf(|\nabla v|^2) \nabla v), & x \in \Omega, t>0 , \\  
     0=\Delta v-\mu+u,  \quad \mu=f_{\Omega} u, & x \in \Omega, t>0 , \\
 \frac{\partial u}{\partial \nu}=\frac{\partial v}{\partial \nu}=0, & x \in \partial \Omega, t>0, \\
u(x, 0) = u_0(x), & x \in \Omega,
  \end{array}
  \right.
\end{align}
when $f(\xi)=\chi\xi^{\frac{p-2}{2}}$, if $p \in\big(1, \frac{n}{n-1}\big)$ ($n \geq 2$) or $p \in (1, +\infty)$ ($n=1$), Negreanu and Tello \cite{2018-JDE-NegreanuTello} obtained global bounded classical solutions. Later, Tello \cite{2022-CPDE-Tello} demonstrated that if $p \in (\frac{n}{n-1},2)$ ($n>2$), for sufficiently large $\chi$, there exist radially symmetric initial data with $\frac{1}{|\Omega|} \int_{\Omega} u_0 \mathrm{d} x>6$, such that the solutions blow up in finite time. When $f(\xi)=\chi(1+\xi)^{-\frac{p}{2}}$, Winkler \cite{2022-IUMJ-Winkler} proved that, if $0<p<\frac{n-2}{n-1}$ ($n \geq 3$), throughout a considerably large set of radially symmetric initial data, the corresponding solutions blow up in finite time; if $p>\frac{n-2}{n-1}$ ($n \geq 2$) or $p \in \mathbb{R}$ ($n=1$), all solutions are globally bounded. 

Tao and Winkler \cite{2015-DCDSSB-TaoWinkler} proposed the two-species chemotaxis system with two chemicals 
\begin{align}\label{eq1.1.0-1}
\begin{cases}
u_t = \nabla \cdot(D_1(u) \nabla u) -  \nabla \cdot \left(S_1(u) \nabla v\right), & x \in \Omega, \ t > 0, \\ 
0 = \Delta v - v + w,  & x \in \Omega, \ t > 0, \\ 
w_t = \nabla \cdot(D_2(w) \nabla w) -  \nabla \cdot \left(S_2(w) \nabla z\right), & x \in \Omega, \ t > 0, \\ 
0 = \Delta z - z + u,  & x \in \Omega, \ t > 0, \\
\frac{\partial u}{\partial \nu} = \frac{\partial v}{\partial \nu} = \frac{\partial w}{\partial \nu} = \frac{\partial z}{\partial \nu} = 0, & x \in \partial \Omega, \ t > 0, \\
u(x, 0) = u_0(x), \quad w(x, 0) = w_0(x), & x \in \Omega.
\end{cases}
\end{align}
Let $m_u:=\int_{\Omega} u_0(x) \mathrm{d} x$ and $ m_w:=\int_{\Omega} w_0(x) \mathrm{d} x$. They considered the case $D_i(\xi) \equiv 1$ and $S_i(\xi)=\xi$ and proved that, if either $n=2$ and $m_u+m_w$ lies below some threshold, or $n \geq 3$ and $\left\|u_0\right\|_{L^{\infty}(\Omega)}$, $\left\|w_0\right\|_{L^{\infty}(\Omega)}$ are sufficiently small, all solutions are globally bounded; whereas if either $n=2$ and $m_u+m_w$ is suitably large, or $n \geq 3$ and $m_u+m_w>0$ is arbitrary, there exist initial data such that the corresponding solutions blow up in finite time. Recently, the critical mass curve in two dimensions has been identified. Yu et al. \cite{2018-N-YuWangZheng} proved that, if $m_u m_w-2 \pi\left(m_u+m_w\right)>0$, then there exist finite time blow-up solutions. Yu et al. \cite{2024-NARWA-YuXueHuZhao} obtained globally bounded classical solutions, provided that $m_u m_w-2 \pi\left(m_u+m_w\right)<0$.

When $D_i(u)= (u+1)^{p_i-1}$ and $S_i(u) = u(1+u)^{q_i-1}$, Zheng \cite{2017-TMNA-Zheng} showed that solutions are globally bounded if $q_1<p_1-1+\frac{2}{n}$ and $q_2<p_2-1+\frac{2}{n}$. In the case $q_i \equiv 1$, Zhong \cite{2021-JMAA-Zhong} demonstrated that the range of $p_1$ and $p_2$ can be extended to $p_1p_2+\frac{2p_1}{n}>p_1+p-2$ or $p_1p_2+\frac{2p_2}{n}>p_1+p-2$. Recently, Zeng and Li obtained a critical blow-up curve (i.e. $q_1+q_2-\frac{4}{n}=\max\big\{(q_1-\frac{2}{n})q_2,(q_2-\frac{2}{n})q_1\big\}$ in the square $(0,\frac{4}{n}) \times (0,\frac{4}{n})$) for the system (\ref{eq1.1.0-1}) with $p_i \equiv 1$ \cite{2025--ZengLi-1} and two critical blow-up lines (i.e. $q_1-(p_1-1)=2-\frac{n}{2}$ and $q_1=1-\frac{n}{2}$) for the system (\ref{eq1.1.0-1}) with $p_2 \equiv q_2 \equiv 1$ \cite{2025--ZengLi-2}.

Motivated by the critical blow-up exponent phenomenon in system (\ref{1.0.5}), we investigate the system (\ref{eq1.1.0}) with flux-limitation and aim to find its critical blow-up curve.

$\mathbf{Main\ results.}$ Let $p,q \in \mathbb{R}$ and
\begin{align}\label{f}
f(|\nabla v|^2)=(1+|\nabla v|^2)^{-\frac{p}{2}}
\end{align}
and
\begin{align}\label{g}
g(|\nabla z|^2)=(1+|\nabla z|^2)^{-\frac{q}{2}}.
\end{align}
We assume that 
\begin{align}\label{eq1.1}
u_0, w_0 \in  W^{1,\infty}(\overline{\Omega}) \text{ are positive }.
\end{align}

The following local existence and uniqueness result is standard and a similar argument can be found in \cite{2015-DCDSSB-TaoWinkler, 2022-IUMJ-Winkler, 2025--ZengLi-1}. 

\begin{prop}\label{local}
Let $\Omega \subset \mathbb{R}^n$ $(n \geq 1)$ be a smooth bounded domain. Assume that $\left(u_0, w_0\right)$ is as in $(\ref{eq1.1})$. Then there exist $T_{\max } \in(0, \infty]$ and uniquely determined positive functions 
\begin{align*}
\begin{aligned}
& u \in C^0\left(\overline{\Omega} \times\left[0, T_{\max }\right)\right) \cap C^{2,1}\left(\overline{\Omega} \times\left(0, T_{\max }\right)\right), \\
& v \in C^{2,0}\left(\overline{\Omega} \times\left(0, T_{\max }\right)\right), \\
& w \in C^0\left(\overline{\Omega} \times\left[0, T_{\max }\right)\right) \cap C^{2,1}\left(\overline{\Omega} \times\left(0, T_{\max }\right)\right), \\
& z \in C^{2,0}\left(\overline{\Omega} \times\left(0, T_{\max }\right)\right),
\end{aligned}
\end{align*}
satisfying $\int_{\Omega} v(\cdot,t) \mathrm{~d}x=0$ and $\int_{\Omega} z(\cdot,t) \mathrm{~d}x=0$ for all $t \in (0,T_{\max})$, such that \eqref{eq1.1.0} is solved in the classical sense in $\Omega \times\left(0, T_{\max }\right)$, and the following extensibility property holds:  
\begin{align*}
\text { if } T_{\max }<\infty \text {, then } \limsup _{t \nearrow T_{\max }}\left(\|u(\cdot, t)\|_{L^{\infty}(\Omega)}+\|w(\cdot, t)\|_{L^{\infty}(\Omega)}\right)=\infty \text {. }
\end{align*}
Moreover, we have
\begin{align}\label{mass}
\int_\Omega u(\cdot,t) \mathrm{~d}x=\int_\Omega u_0 \mathrm{~d}x,\quad
\int_\Omega w(\cdot,t) \mathrm{~d}x=\int_\Omega w_0 \mathrm{~d}x, \quad
t\in(0,T_{\max}).
\end{align}

In addition, if $\Omega=B_R(0)$ for some $R>0$, and $\left(u_0, w_0\right)$ is a pair of radially symmetric functions, then $u, v, w, z$ are all radially symmetric.
\end{prop}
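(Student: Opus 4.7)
The plan is to establish Proposition \ref{local} through a standard contraction-mapping argument on the parabolic-elliptic coupling, and then to read off the auxiliary properties as direct consequences; the scheme parallels the local-existence proofs in \cite{2015-DCDSSB-TaoWinkler, 2022-IUMJ-Winkler, 2025--ZengLi-1}.

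First, fix $T \in (0,1]$ to be chosen, set $K := 2(\|u_0\|_{L^\infty} + \|w_0\|_{L^\infty}) + 1$, and consider the closed subset
\[
\mathcal{S}_T := \Bigl\{(\tilde u,\tilde w) \in C^0(\overline\Omega \times [0,T])^2 : \tilde u(\cdot,0) = u_0,\ \tilde w(\cdot,0) = w_0,\ \|\tilde u\|_\infty + \|\tilde w\|_\infty \le K \Bigr\}
\]
of the ambient Banach space with the supremum norm. Given $(\tilde u,\tilde w) \in \mathcal{S}_T$, the two elliptic Neumann problems $0 = \Delta v - f_\Omega \tilde w + \tilde w$ and $0 = \Delta z - f_\Omega \tilde u + \tilde u$ satisfy the compatibility condition by construction, so the classical theory yields unique solutions $v,z$ normalized to $\int_\Omega v = \int_\Omega z = 0$. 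Since the right-hand sides are bounded in $L^\infty(\Omega)$ by a constant depending only on $K$ and $|\Omega|$, $W^{2,s}$ regularity for some $s \in (n,\infty)$ and Morrey embedding give $v,z \in C^{1,\alpha}(\overline\Omega)$ with a uniform bound on $\|\nabla v\|_\infty + \|\nabla z\|_\infty$ depending only on $K$. Consequently $f(|\nabla v|^2)$ and $g(|\nabla z|^2)$ are bounded and smooth functions of $\nabla v$ and $\nabla z$ uniformly over $\mathcal{S}_T$, \emph{irrespective of the sign of} $p$ and $q$ — a point that matters because $(1+|\xi|^2)^{-p/2}$ need not be globally bounded when $p<0$.

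With $v$ and $z$ determined, I would next solve the two decoupled linear parabolic Neumann problems for $u$ and $w$ with drift coefficients $f(|\nabla v|^2)\nabla v$ and $g(|\nabla z|^2)\nabla z$, respectively; the standard parabolic $L^p$-theory produces unique classical solutions in $C^0(\overline\Omega \times [0,T]) \cap C^{2,1}(\overline\Omega \times (0,T])$, together with $L^\infty$ bounds of Gronwall type. This defines a map $\Phi : \mathcal{S}_T \to C^0(\overline\Omega \times [0,T])^2$, $(\tilde u, \tilde w) \mapsto (u,w)$. Taking differences of two images and exploiting the continuity of the elliptic solution map in $W^{2,s}$ together with the $C^1$-regularity of $f$ and $g$, one obtains an estimate of the form
\[
\|\Phi(\tilde u_1,\tilde w_1) - \Phi(\tilde u_2,\tilde w_2)\|_\infty \le C(K)\, T^{\kappa}\, \|(\tilde u_1 - \tilde u_2,\, \tilde w_1 - \tilde w_2)\|_\infty
\]
for some $\kappa > 0$. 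For $T$ sufficiently small, $\Phi$ maps $\mathcal{S}_T$ into itself and is a strict contraction, so the Banach fixed-point theorem produces the unique local solution. A maximal-time argument then extends it to a maximal interval $[0,T_{\max})$ with the stated $L^\infty$-blow-up criterion.

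The remaining assertions follow at once. Mass conservation \eqref{mass} is obtained by integrating the $u$- and $w$-equations over $\Omega$ and using the Neumann boundary condition to annihilate the chemotactic fluxes. Radial symmetry on $\Omega = B_R(0)$ follows from uniqueness, since any rotation of a solution solves the same problem with the same (radial) initial data and hence coincides with the original. The main technical subtlety I anticipate is the Lipschitz estimate for $\Phi$ when $p$ or $q$ is negative, since the flux-limiter $(1+|\nabla v|^2)^{-p/2}$ cannot then be bounded by inspection; this is circumvented precisely by the $L^\infty$-control of $\nabla v,\nabla z$ provided by elliptic regularity on $\mathcal{S}_T$, and, beyond this point, every step of the argument is by now routine for this class of chemotaxis systems.
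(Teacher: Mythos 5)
Your sketch matches the standard contraction-mapping / semigroup-and-bootstrap approach that the paper invokes by citing \cite{2015-DCDSSB-TaoWinkler, 2022-IUMJ-Winkler, 2025--ZengLi-1} (the paper supplies no proof of its own). Your observation that the potentially unbounded factor $(1+|\nabla v|^2)^{-p/2}$ for $p<0$ is tamed by the uniform $W^{2,s}$ elliptic estimate on $v,z$ is precisely the relevant point, and the derivations of mass conservation and radial symmetry (rotation invariance plus uniqueness) are handled correctly.
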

The first theorem demonstrate that finite-time blow-up occurs in system (\ref{eq1.1.0}).
\begin{thm}\label{thm1_1}
Let $n \geq 3$ and $\Omega=B_R(0) \subset \mathbb{R}^n$ with some $R>0$. Assume that $u_0$ and $w_0$ are radially symmetric that satisfy $(\ref{eq1.1})$. Suppose that $(\ref{f})$ and $(\ref{g})$ hold with $p,q \in \mathbb{R} $ satisfying
\begin{align}\label{pq}
p< \frac{n-2}{n-1}\quad \text{and} \quad  q< \frac{n-2}{n-1}.
\end{align}
Then, there exist functions $M_1(r), M_2(r) \in C^0([0, R])$ such that if $u_0$, $w_0$ satisfy 
\begin{align}\label{eq1.8}
\int_{B_r(0)} u_0  \mathrm{~d}x \geq M_1(r),\quad \int_{B_r(0)} w_0  \mathrm{~d}x \geq M_2(r) ,\quad  r\in(0,R),
\end{align}
the corresponding solution of \eqref{eq1.1.0} blows up in finite time.
\end{thm}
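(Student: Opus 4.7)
The plan is to pass to the radial mass accumulation variables, derive a coupled pair of one-dimensional parabolic equations for them, construct matching subsolutions that force concentration at the origin in finite time, and conclude via the extensibility criterion in Proposition~\ref{local}.

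\textbf{Step 1: Reduction to mass accumulation.} I would set
\[
U(s,t) := \int_0^{s^{1/n}} u(\rho,t)\,\rho^{n-1}\,\mathrm{d}\rho,\qquad W(s,t) := \int_0^{s^{1/n}} w(\rho,t)\,\rho^{n-1}\,\mathrm{d}\rho,\qquad s \in [0, R^n],
\]
so that $u = nU_s$ and $w = nW_s$, while the conservation laws in \eqref{mass} fix $U(R^n,\cdot)$ and $W(R^n,\cdot)$. Solving the radial form of the elliptic equations for $v$ and $z$ gives
\[
v_\rho(\rho,t) = \tfrac{\mu_w}{n}\rho - \rho^{1-n}W(\rho^n,t),\qquad z_\rho(\rho,t) = \tfrac{\mu_u}{n}\rho - \rho^{1-n}U(\rho^n,t),
\]
and a direct computation transforms \eqref{eq1.1.0} on the radial class into the coupled system
\begin{align*}
U_t &= n^2 s^{2-2/n}U_{ss} - ns^{1-1/n} U_s\,f\!\bigl(v_\rho^2\bigr)\,v_\rho,\\
W_t &= n^2 s^{2-2/n}W_{ss} - ns^{1-1/n} W_s\,g\!\bigl(z_\rho^2\bigr)\,z_\rho,
\end{align*}
on $(0,R^n) \times (0,T_{\max})$ with $U(0,\cdot) = W(0,\cdot) = 0$, where $v_\rho, z_\rho$ are evaluated at $\rho = s^{1/n}$.

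\textbf{Step 2: Coupled subsolutions.} Following the strategy of Winkler~\cite{2022-IUMJ-Winkler} for the single-species flux-limited case, I would construct a pair $(\underline U, \underline W)$ of subsolutions of self-similar type, of shape $\underline U(s,t) = m_1(t)\,\Phi(s/s_0(t))$ and $\underline W(s,t) = m_2(t)\,\Psi(s/s_0(t))$, with a core radius $s_0(t) \searrow 0$ at some finite time $T^*$. For concentrated profiles one has $|v_\rho| \sim s^{-(n-1)/n}W(s,t)$ near the origin, hence $|f(v_\rho^2)v_\rho| \sim |v_\rho|^{1-p}$ in the large-gradient regime. Testing a power ansatz $U \sim s^\alpha$ shows that the drift scales as $s^{\alpha - 1 + (n-1)p/n}$ while the diffusion scales as $s^{\alpha - 2/n}$, so the drift strictly dominates near $s = 0$ precisely when $(n-1)p < n-2$, i.e.\ under \eqref{pq}; the analogous condition controls the $\underline W$ inequality. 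Matching these relations fixes admissible exponents and profiles $\Phi, \Psi$ of core-plus-tail shape, and yields an ODE system for $m_1(t), m_2(t), s_0(t)$ that can be arranged to drive $\underline U_s(0^+, t) + \underline W_s(0^+, t) \to \infty$ as $t \to T^{*-}$. Unwinding the change of variables, I would then define $M_1(r) := n\omega_n\,\underline U(r^n, 0)$ and $M_2(r) := n\omega_n\,\underline W(r^n, 0)$ so that \eqref{eq1.8} translates precisely into the pointwise initial ordering $U(\cdot,0) \ge \underline U(\cdot,0)$ and $W(\cdot,0) \ge \underline W(\cdot,0)$.

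\textbf{Step 3: Comparison and conclusion.} A comparison argument for the coupled system would give $U \ge \underline U$ and $W \ge \underline W$ on $[0,R^n] \times [0, \min(T_{\max}, T^*))$. Combined with the singularity of $\underline U_s(0^+, t)$ and $\underline W_s(0^+, t)$ as $t \to T^{*-}$, this forces $\|u(\cdot,t)\|_{L^\infty(\Omega)} + \|w(\cdot,t)\|_{L^\infty(\Omega)} \to \infty$ at or before $T^*$, so Proposition~\ref{local} gives $T_{\max} < \infty$. The main technical obstacle will be this last comparison step: because the drift of $U$ depends on $W$ through $v_\rho$ (and symmetrically for $W$), the standard scalar parabolic maximum principle does not apply. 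A workable approach is to arrange $\Phi, \Psi$ so that the drift coefficients $f(v_\rho^2)v_\rho$ and $g(z_\rho^2)z_\rho$ are monotone in $W$ and $U$ respectively along the subsolution pair, so that the ordering is propagated by an iterative Gr\"onwall-type comparison on short time intervals. Additional care is required in handling the two asymptotic regimes of the flux limiter (essentially linear in $v_\rho$ for small $|v_\rho|$, and of order $|v_\rho|^{1-p}$ for large $|v_\rho|$) uniformly across the transition region $s \approx s_0(t)$, where the two scalings must be matched.
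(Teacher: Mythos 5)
Your plan follows the paper's strategy closely: mass-accumulation variables, Winkler-style core-plus-tail subsolutions whose core radius collapses in finite time under the exponent restriction $p,q < (n-2)/(n-1)$, and a comparison principle that exploits the quasi-monotone structure of the cross-coupled drift. One clarifying remark: the monotonicity you propose to ``arrange'' via the choice of $\Phi,\Psi$ is in fact automatic for every $p<1$, since $h(x)=x(1+x^2)^{-p/2}$ is increasing; the paper additionally replaces $\mu_u,\mu_w$ by $\mu^\star=\max\{\mu_u,\mu_w\}$ (using that the drift is decreasing in $\mu$) so that $(U,W)$ becomes a supersolution of a single symmetric operator pair $\mathcal{P},\mathcal{Q}$, and then runs a first-touching-point maximum-principle argument in place of your Gr\"onwall iteration, but these are variations in execution, not in approach.
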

The next theorem indicates that the range in (\ref{pq}) is optimal for finite-time blow-up.
\begin{thm}\label{thm1_2}
Let $n \geq 2$ and $\Omega \subset \mathbb{R}^n$ be a smooth bounded domain. Suppose that $(\ref{f})$ and $(\ref{g})$ hold with $p,q \in \mathbb{R}$ satisfying 
\begin{align}\label{pq-global}
p>\frac{n-2}{n-1} \ \text{ or } \ q>\frac{n-2}{n-1}, 
\end{align}
Then, for any choose of $u_0$, $w_0$ complying with $(\ref{eq1.1})$, the problem $(\ref{eq1.1.0})$ possesses a unique global classical solution which is bounded in the sense that
\begin{align*}
\|u(\cdot, t)\|_{L^{\infty}(\Omega)}+\|w(\cdot, t)\|_{L^{\infty}(\Omega)} \leq C, 
\quad  t>0,
\end{align*}
with some constants $C$ independent of $t$.
\end{thm}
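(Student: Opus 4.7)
By the symmetric role of $(u,v,p)$ and $(w,z,q)$ in \eqref{eq1.1.0}, we may assume without loss of generality that $p > \frac{n-2}{n-1}$. The plan is a regularity bootstrap: first establish a uniform $L^s$-bound for $u$ for every finite $s$ using only the conserved $L^1$-norm of $w$; then pass successively to $L^\infty$-control of $\nabla z$, of $w$, of $\nabla v$, and finally of $u$.

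The case $p \geq 1$ is straightforward because $|(1+|\nabla v|^2)^{-p/2}\nabla v| \leq 1$. The genuine work is in the critical range $\frac{n-2}{n-1} < p < 1$; set $a := 2(1-p) \in \bigl(0, \tfrac{2}{n-1}\bigr)$. Testing the first equation of \eqref{eq1.1.0} against $u^{s-1}$ for $s \geq 2$ and applying Young's inequality yields
\begin{align*}
\frac{d}{dt}\int_\Omega u^s + \frac{2(s-1)}{s}\int_\Omega |\nabla u^{s/2}|^2 \leq C_s \int_\Omega u^s \bigl(1 + |\nabla v|^a\bigr).
\end{align*}
Elliptic $W^{2,r}$-regularity for $\Delta v = \mu_w - w$ with Neumann condition, together with $\|w(\cdot,t)\|_{L^1} = m_w$, gives $\|\nabla v(\cdot,t)\|_{L^r} \leq C$ for every $r \in \bigl(1, \tfrac{n}{n-1}\bigr)$. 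Hölder's inequality with conjugate exponents $(\alpha,\alpha')$ yields
\[ \int_\Omega u^s |\nabla v|^a \leq \|u\|_{L^{s\alpha}}^s \, \|\nabla v\|_{L^{a\alpha'}}^a . \]
The sub-criticality $a < \tfrac{2}{n-1}$ makes the interval $\bigl(\tfrac{n}{2}, \tfrac{n}{a(n-1)}\bigr)$ nonempty; any $\alpha'$ therein simultaneously keeps $a\alpha' < \tfrac{n}{n-1}$ (so the previous elliptic estimate applies) and, via $\alpha < \tfrac{n}{n-2}$, enables the Gagliardo-Nirenberg interpolation
\[ \|u\|_{L^{s\alpha}}^s = \|u^{s/2}\|_{L^{2\alpha}}^2 \leq C\bigl(\|\nabla u^{s/2}\|_{L^2}^{2\theta} + 1\bigr), \qquad \theta \in (0,1). \]
Young's inequality then absorbs $\|\nabla u^{s/2}\|_{L^2}^2$ on the left, and a further Gagliardo-Nirenberg step on the remaining $\int u^s$-term (anchored at $\|u\|_{L^1} = m_u$) produces a differential inequality that forces $\sup_{t \in (0,T_{\max})} \|u(\cdot,t)\|_{L^s(\Omega)} < \infty$ for every $s \in [1,\infty)$.

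Fixing some $s > n$, the elliptic $W^{2,s}$-regularity for $\Delta z = \mu_u - u$ combined with $W^{1,s} \hookrightarrow L^\infty$ yields $\|\nabla z(\cdot,t)\|_{L^\infty} \leq C$. Crucially, this bound is independent of $q$: the drift in the $w$-equation satisfies $|(1+|\nabla z|^2)^{-q/2}\nabla z| \leq (1+|\nabla z|^2)^{(1-q)/2}$, which is bounded once $|\nabla z|$ is, regardless of the sign or size of $q$. A standard $L^s$/Moser iteration for a drift-diffusion equation with uniformly bounded advection velocity and conserved $L^1$-mass then gives $\|w(\cdot,t)\|_{L^\infty} \leq C$; elliptic regularity upgrades this to $\|\nabla v(\cdot,t)\|_{L^\infty} \leq C$; the resulting bounded drift in the $u$-equation yields $\|u(\cdot,t)\|_{L^\infty} \leq C$; and the extensibility criterion of Proposition \ref{local} forces $T_{\max} = \infty$.

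The principal obstacle is the first step: leveraging merely the $L^1$-mass of $w$ all the way to arbitrary $L^s$-control on $u$. The condition $p > \frac{n-2}{n-1}$ enters precisely through the compatibility between the Calder\'on-Zygmund constraint $\alpha' < \tfrac{n}{a(n-1)}$ (for the elliptic estimate on $\nabla v$) and the Gagliardo-Nirenberg constraint $\alpha' > \tfrac{n}{2}$ (needed for $\theta < 1$ so Young's inequality absorbs the dissipation). Once the $L^s$-bound on $u$ is in hand, the remaining bootstraps are routine, because the possibly super-critical parameter $q$ is neutralized by the $L^\infty$-bound on $\nabla z$.
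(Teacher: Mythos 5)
Your proposal is correct, but it follows a genuinely different route than the paper. The paper does \emph{not} pass through an $L^s$-energy hierarchy at all: instead it writes a Duhamel formula for $u$, invokes the $L^r$--$L^\infty$ smoothing estimate for the Neumann heat semigroup, estimates $\|u\,f(|\nabla v|^2)\nabla v\|_{L^r}$ via H\"older together with Lemma~\ref{nablav} (the $W^{1,k}$ elliptic bound for $k<\tfrac{n}{n-1}$), and interpolates $u$ between $L^1$ and $L^\infty$. Because the resulting exponent on $M(T):=\sup_{(0,T)}\|u(\cdot,t)\|_{L^\infty}$ is $a=1-\tfrac{1}{r}+\tfrac{1-p}{k}<1$ (a strict inequality that encodes $p>\tfrac{n-2}{n-1}$ through the choice $\tfrac{1-p}{k}<\tfrac{1}{r}<\tfrac{1}{n}$), one obtains the self-referential bound $M(T)\leq c_4+c_4 M^a(T)$, which immediately yields an $L^\infty$-bound for $u$ in a single stroke. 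After that the paper, like you, derives $\|\nabla z\|_{L^\infty}\leq C$ from elliptic regularity, observes that the drift in the $w$-equation is then bounded for every $q\in\mathbb{R}$, and closes the argument for $w$ by another Duhamel iteration. Your alternative --- testing against $u^{s-1}$, using the same $L^k$-bound on $\nabla v$, and running a Gagliardo--Nirenberg/Young absorption to first reach $\|u\|_{L^s}<\infty$ for every $s<\infty$, then $\|\nabla z\|_{L^\infty}$, then $w$ and $\nabla v$ and $u$ --- is a valid and more classical energy-method route; you correctly identify the same compatibility window ($a=2(1-p)<\tfrac{2}{n-1}$, equivalently the interval $(\tfrac{n}{2},\tfrac{n}{a(n-1)})$ being nonempty) as the manifestation of the threshold $p>\tfrac{n-2}{n-1}$. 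The trade-off is that the paper's semigroup approach jumps straight to $L^\infty$ and so is shorter, while your $L^s$-bootstrap is more self-contained in that it does not rely on the specific $L^r$--$L^\infty$ heat-kernel smoothing constants, only on Gagliardo--Nirenberg interpolation and the elliptic $W^{1,k}$ estimate. Both proofs treat $p\geq 1$ trivially and exploit the same crucial observation that once $\|\nabla z\|_{L^\infty}$ is under control, the potentially supercritical parameter $q$ is harmless.
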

\begin{rem}
For $n\geq 3$, we obtain a critical blow-up curve. The results are summarized in the Figure~\ref{fig:mq_graph}.
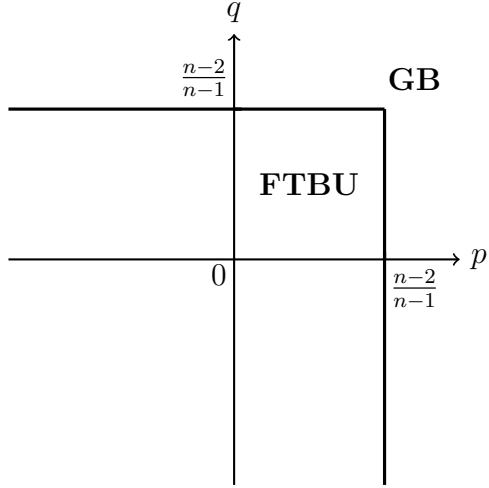
\begin{figure}[!htbp]
\begin{center}
\begin{minipage}{0.4\textwidth}
\begin{center}
\begin{tikzpicture}[scale=2.0]
\draw[->, thick] (-1.5, 0) -- (1.5, 0) node[right]{$p$};
\draw[->, thick] (0, -1.5) -- (0, 1.5) node[above]{$q$};
\draw[very thick] (1, 0) -- (1, 1);
\draw[very thick] (0, 1) -- (1, 1);
\draw[very thick] (1, -1.5) -- (1, 0.05);
\draw[very thick] (-1.5, 1) -- (0.05, 1);
\node[] at (0.5, 0.5) {\textbf{FTBU}};
\node[] at (1.2, 1.2) {\textbf{GB}};
\node[] at (-0.1, -0.1) {$0$};
\node[] at (1.2, -0.20) {$\frac{n-2}{n-1}$};
\node[] at (-0.20, 1.2) {$\frac{n-2}{n-1}$};
\end{tikzpicture}
\end{center}
\end{minipage}
\end{center}
\captionsetup{type=figure}
\captionof{figure}{
“GB”: All solutions are globally bounded. “FTBU”: There exist solutions that blow up in finite time. \\}
\label{fig:mq_graph}
\end{figure}
\end{rem}

The rest of the paper is organized as follows. In Section~\ref{comparison}, we prove a weak comparison principle. Based on this, we construct subsolutions, which have the same form as those in \cite{2025-JDE-TaoWinkler}, to detect finite-time blow-up in Section~\ref{subsolution}. Finally, we prove the global boundedness in Section~\ref{global boundedness}.

\section{A weak comparison principle}\label{comparison}

We use the mass distribution functions defined as
\begin{align}\label{eq2.1}
U(s,t):=\int^{s^\frac{1}{n}}_0 r^{n-1}u(r,t)\mathrm{~d}r\ \ \text{ and }\ \ W(s,t):=\int^{s^\frac{1}{n}}_0 r^{n-1}w(r,t)\mathrm{~d}r
\end{align}
for $s \in\left[0, R^n\right]$ and $ t \in\left[0, T_{\max }\right)$, to transform (\ref{eq1.1.0}) into the following Dirichlet parabolic system
\begin{align}
\begin{cases}
U_t=n^2 s^{2-\frac{2}{n}} U_{s s}+n U_s(W-\frac{\mu_w}{n} s) f\left(s^{\frac{2}{n}-2}(W-\frac{\mu_w}{n} s)^2\right), &s \in\left(0, R^n\right), t \in\left(0, T_{\max }\right), \\
W_t=n^2s^{2-\frac{2}{n}} W_{s s}+ n W_s(U-\frac{\mu_u}{n}s) g\left(s^{\frac{2}{n}-2}(U-\frac{\mu_u}{n} s)^2\right) ,  &s \in\left(0, R^n\right), t \in\left(0, T_{\max }\right), \\ 
U(0, t)=W(0, t)=0, \quad U\left(R^n, t\right)=\frac{\mu_uR^n}{n}, \quad W\left(R^n, t\right)=\frac{\mu_wR^n}{n}, & t \in\left(0, T_{\max }\right), \\ 
U(s, 0)=U_0(s):=\int_0^{s^{\frac{1}{n}}} \rho^{n-1} u_0(\rho) \mathrm{d} \rho, & s \in\left(0, R^n\right), \\
W(s, 0)=W_0(s):=\int_0^{s^{\frac{1}{n}}} \rho^{n-1} w_0(\rho) \mathrm{d} \rho, & s \in\left(0, R^n\right).
\end{cases}
 \end{align}
Let $T>0$. For any $\varphi, \psi\in C^1\left(\left[0, R^n\right] \times [0, T)\right)$, which satisfy $\varphi_s,\psi_s \geq 0$ on $\left(0, R^n\right) \times(0, T)$ and $\varphi(\cdot, t), \psi(\cdot, t) \in W_{l o c}^{2, \infty}\left(\left(0, R^n\right)\right)$ for all $t \in(0, T)$, we define the differential operators $\mathcal{P}$ and $\mathcal{Q}$ by
\begin{align}\label{eq2.3}
\begin{cases}
\begin{aligned}
&\mathcal{P}[\varphi, \psi](s, t):=\varphi_t-n^2 s^{2-\frac{2}{n}} \varphi_{s s}-n \varphi_s \cdot\Big(\psi-\frac{\mu^{\star} s}{n}\Big)f\left(s^{\frac{2}{n}-2}\Big(\psi-\frac{\mu^{\star} s}{n}\Big)^2\right), \\
& \mathcal{Q}[\varphi, \psi](s, t):=\psi_t-n^2 s^{2-\frac{2}{n}} \psi_{s s}-n \psi_s \cdot\Big(\varphi-\frac{\mu^{\star} s}{n}\Big) g\left(s^{\frac{2}{n}-2}\Big(\varphi-\frac{\mu^{\star} s}{n}\Big)^2\right) 
\end{aligned}
\end{cases}
\end{align}
for $t \in(0, T)$ and a.e. $s \in\left(0, R^n\right)$, where
\begin{align}\label{eq2_4}
\mu^{\star}:=\max\{\mu_u,\mu_w\}.
\end{align}

\begin{lem}
Suppose that $p,q < 1$, then $U$ and $W$, as defined in $(\ref{eq2.1})$, satisfy 
\begin{align}\label{eq2_5}
\begin{split}
\begin{cases}
\mathcal{P}[U, W](s, t) \geq 0,\quad &s\in(0,R^n), t\in(0,T_{\max}), \\
\mathcal{Q}[U, W](s, t) \geq 0, \quad &s\in(0,R^n), t\in(0,T_{\max}),\\
U(0,t)=W(0,t)=0,\quad &t\in(0,T_{\max}),\\
U(R^n,t)\geq\frac{\mu_{\star}R^n}{n},\ W(R^n,t)\geq\frac{\mu_{\star}R^n}{n},\quad &t\in(0,T_{\max}),\\
U(s,0)=\int^{s^\frac{1}{n}}_0 r^{n-1}u_0(r,t)\mathrm{d}r,\quad &s\in(0,R^n),\\
W(s,0)=\int^{s^\frac{1}{n}}_0 r^{n-1}w_0(r,t)\mathrm{d}r,\quad &s\in(0,R^n),
\end{cases}
\end{split}
\end{align}
where
\begin{align}\label{eq2_6}
\mu_{\star}:=\min\{\mu_u ,\mu_w\}.
\end{align}
\end{lem}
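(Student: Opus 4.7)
The plan is to derive the exact evolution equations satisfied by $U$ and $W$ through the standard radial reduction, and then to observe that the only discrepancy between these equations and the operators $\mathcal{P},\mathcal{Q}$ defined in \eqref{eq2.3} is that the latter use $\mu^\star=\max\{\mu_u,\mu_w\}$ in place of the ``correct'' averages $\mu_w$ (for the equation of $U$) and $\mu_u$ (for the equation of $W$). A monotonicity argument, available precisely because $p,q<1$, will convert equalities into inequalities of the required sign.

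First I would perform the radial reduction. From $u=nU_s$ and $w=nW_s$ (both nonnegative since $u,w>0$), integrating the elliptic identity $0=\Delta v-\mu_w+w$ over $B_r(0)$ gives
\begin{align*}
v_r(r,t)=-r^{1-n}\Bigl(W(r^n,t)-\tfrac{\mu_w}{n}\,r^n\Bigr),\qquad z_r(r,t)=-r^{1-n}\Bigl(U(r^n,t)-\tfrac{\mu_u}{n}\,r^n\Bigr).
\end{align*}
Multiplying the PDE for $u$ by $r^{n-1}$, integrating from $0$ to $s^{1/n}$, and inserting these expressions for $v_r$, one obtains
\begin{align*}
U_t=n^2 s^{2-\frac{2}{n}}\,U_{ss}+n U_s\Bigl(W-\tfrac{\mu_w}{n}\,s\Bigr)\,f\!\left(s^{\frac{2}{n}-2}\Bigl(W-\tfrac{\mu_w}{n}\,s\Bigr)^{\!2}\right),
\end{align*}
together with an analogous identity for $W_t$ involving $\mu_u$ and $g$. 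Hence $\mathcal{P}[U,W]\ge 0$ reduces to showing that the right-hand side does not decrease when $\mu_w$ is replaced by $\mu^\star\ge\mu_w$, and likewise for $\mathcal{Q}$.

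The key computation is then that for any $\sigma\ge 0$, the map $a\mapsto a(1+\sigma a^2)^{-p/2}$ has derivative $(1+\sigma a^2)^{-\frac{p}{2}-1}\bigl[1+(1-p)\sigma a^2\bigr]$, which is nonnegative precisely because $p<1$. Combined with $W-\tfrac{\mu^\star}{n}s\le W-\tfrac{\mu_w}{n}s$ and $U_s\ge 0$, this forces the right-hand side to drop when $\mu_w$ is replaced by $\mu^\star$, yielding $\mathcal{P}[U,W]\ge 0$; the same mechanism with $q<1$ yields $\mathcal{Q}[U,W]\ge 0$. The remaining conditions are routine: $U(0,t)=W(0,t)=0$ and the initial-data identities hold by definition, while the mass conservation from Proposition~\ref{local} gives $U(R^n,t)=\tfrac{\mu_u R^n}{n}\ge\tfrac{\mu_\star R^n}{n}$ and similarly for $W$. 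The only delicate point, and hence the main obstacle, is the sign of the bracket $1+(1-p)\sigma a^2$: it is exactly here that the hypothesis $p,q<1$ enters, and the argument would fail at the borderline value.
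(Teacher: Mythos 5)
Your proof is correct and follows essentially the same route as the paper: you derive the exact mass-functional system, observe that $\mathcal{P},\mathcal{Q}$ differ only in replacing $\mu_w,\mu_u$ by $\mu^\star\ge\mu_w,\mu_u$, and then use the monotonicity of $a\mapsto a(1+\sigma a^2)^{-p/2}$ (hinging on $p<1$), together with $U_s,W_s\ge 0$, to obtain the desired sign; the paper merely packages the same calculation as $\tfrac{d}{dx}F_W(x)\le 0$. One small inaccuracy in your closing commentary: the bracket $1+(1-p)\sigma a^2$ is still $\ge 1>0$ at $p=1$, so the monotonicity does not actually ``fail at the borderline value''—the sharp threshold for this step is $p\le 1$—but this does not affect the validity of the proof under the stated hypothesis $p,q<1$.
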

\begin{proof}
To compute $\mathcal{P}[U, W]$, we introduce the following notation
$$F_{W}(x):=\Big(W-\frac{x s}{n}\Big)f\left(s^{\frac{2}{n}-2}\Big(W-\frac{x s}{n}\Big)^2\right)=\Big(W-\frac{x s}{n}\Big)\left(1+s^{\frac{2}{n}-2}\Big(W-\frac{x s}{n}\Big)^2\right)^{-\frac{p}{2}}.$$
Owing to $p<1$, by a direct computation, we have
\begin{align}
\frac{d F_{W}}{dx}= & \frac{p}{2}\Big(W-\frac{xs}{n}\Big)^2\left(1+s^{\frac{2}{n}-2}\Big(W-\frac{xs}{n}\Big)^2\right)^{-\frac{p}{2}-1}
 \cdot \frac{2s^{\frac{2}{n}-1}}{n} -\frac{s}{n}\left(1+s^{\frac{2}{n}-2}\Big(W-\frac{xs}{n}\Big)^2\right)^{-\frac{p}{2}}  \notag \\ =&\frac{1}{n}\left(1+s^{\frac{2}{n}-2}\left(W-\frac{x s}{n}\right)^2\right)^{-\frac{p}{2}-1} \cdot\left(( p-1) s^{\frac{2}{n}-1}\left(W-\frac{x s}{n}\right)^2-s\right) \nonumber \\ 
\leq&  0 .
\end{align}
Thus, using (\ref{eq2.3}) and (\ref{eq2_4}), we infer that
\begin{align*}
\mathcal{P}[U, W](s, t)=&U_t-n^2 s^{2-\frac{2}{n}} U_{s s}-n U_sF_{W}(\mu^{\star}) \notag \\
\geq &U_t-n^2 s^{2-\frac{2}{n}} U_{s s}-n U_sF_{W}(\mu_w)\notag \\
=&0.
\end{align*} 
Similarly, we have $\mathcal{Q}[U, W](s, t) \geq0$ by $q<1$.
\end{proof}
The following comparison principle forms a fundamental fact for our derivation of Theorem~\ref{thm1_1}. For the proof of Theorem~\ref{thm1_1}, we define
\begin{align}\label{h}
h(x):=x(1+x^2)^{-\frac{p}{2}}, \quad x \geq 0.
\end{align}
Thus, for all $p< 1$, we have
\begin{align}\label{hprime}
h^{\prime}(x)>0, \quad x \geq 0.
\end{align}
\begin{lem}\label{lem2.2}
Let $p, q< 1$, $T>0$ and $\Omega = B_{R}(0) \subset \mathrm{R}^n$ $(n\geq 1)$. Suppose that $\underline{U}, \overline{U}, \underline{W}, \overline{W} \in C^1\left(\left[0, R^n\right] \times[0, T)\right)$ such that $\underline{U}_s, \overline{U}_s, \underline{W}_s, \overline{W}_s\geq0$ for $(s,t)\in(0,R^n) \times (0,T)$ as well as $\underline{U}(\cdot,t), \overline{U}(\cdot,t), \underline{W}(\cdot,t), \overline{W}(\cdot,t) \in W_{l o c}^{2, \infty}\left(\left(0, R^n\right)\right)$ for $t \in (0,T)$. Under the assumptions that for all $t \in(0, T)$ and a.e. $s \in\left(0, R^n\right)$,
\begin{align}\label{eq2_8}
\begin{array}{ll}
\mathcal{P}[\underline{U}, \underline{W}](s, t) \leq 0, & \mathcal{P}[\overline{U}, \overline{W}](s, t) \geq 0, \\
\mathcal{Q}[\underline{U}, \underline{W}](s, t) \leq 0, & \mathcal{Q}[\overline{U}, \overline{W}](s, t) \geq 0,
\end{array}
\end{align}
and that furthermore for $t \in[0, T)$,
\begin{align}\label{eq2_9}
\begin{array}{lll}
\underline{U}(0, t) \leq \overline{U}(0, t), & \underline{U}\left(R^n, t\right) \leq \overline{U}\left(R^n, t\right), \\
\underline{W}(0, t) \leq \overline{W}(0, t), & \underline{W}\left(R^n, t\right) \leq \overline{W}\left(R^n, t\right),
\end{array}
\end{align}
as well as for $s \in [0,R^n]$,
\begin{align}\label{eq2 9-1}
 \underline{U}(s, 0) \leq \overline{U}(s, 0),
\quad   \underline{W}(s, 0) \leq \overline{W}(s, 0),
\end{align}
it follows that
\begin{align}\label{eq2_10}
\underline{U}(s, t) \leq \overline{U}(s, t), \quad
\underline{W}(s, t) \leq \overline{W}(s, t), 
\quad  (s,t) \in\left[0, R^n\right] \times [0, T).
\end{align}
\end{lem}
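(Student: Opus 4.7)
The plan is to adapt the standard parabolic comparison via an exponential-in-time perturbation, using the hypothesis $p,q<1$ to control the cross-coupling through a one-sided Lipschitz bound on the nonlinear transport coefficient.

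For arbitrary $\epsilon>0$ and $\lambda>0$ (to be fixed later), I introduce the shifted differences
\begin{align*}
\tilde\Phi(s,t) &:= \underline U(s,t)-\overline U(s,t)-\epsilon e^{\lambda t},\\
\tilde\Psi(s,t) &:= \underline W(s,t)-\overline W(s,t)-\epsilon e^{\lambda t}.
\end{align*}
Hypotheses \eqref{eq2_9}--\eqref{eq2 9-1} render $\tilde\Phi,\tilde\Psi\leq -\epsilon<0$ on the parabolic boundary of $[0,R^n]\times[0,T)$. Aiming at a contradiction, I set $t_0:=\inf\{t\in(0,T):\sup_s\max(\tilde\Phi(s,t),\tilde\Psi(s,t))\geq 0\}$ and assume $t_0<T$. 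By continuity and the strict negativity on the boundary there exists $s_0\in(0,R^n)$ at which $\tilde\Phi(s_0,t_0)=0$ or $\tilde\Psi(s_0,t_0)=0$, with $\tilde\Phi,\tilde\Psi<0$ on $[0,R^n]\times[0,t_0)$. I handle the case $\tilde\Phi(s_0,t_0)=0$ below; the other case is symmetric via $\mathcal{Q}$ using $q<1$.

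At $(s_0,t_0)$ the first-time condition provides $\underline U_t-\overline U_t\geq\lambda\epsilon e^{\lambda t_0}$, the interior $s$-maximum of $\tilde\Phi$ yields $\underline U_s=\overline U_s$ and $\underline U_{ss}\leq\overline U_{ss}$ (in a weak sense, see below), and $\tilde\Psi(\cdot,t_0)\leq 0$ gives $\underline W-\overline W\leq\epsilon e^{\lambda t_0}$ at $(s_0,t_0)$. Subtracting $\mathcal P[\underline U,\underline W]\leq 0$ from $\mathcal P[\overline U,\overline W]\geq 0$ and using $\underline U_s=\overline U_s$ leaves
\begin{align*}
0\leq(\overline U_t-\underline U_t)-n^2 s_0^{2-\frac{2}{n}}(\overline U_{ss}-\underline U_{ss})-n\overline U_s\bigl[F_{\overline W}(\mu^\star)-F_{\underline W}(\mu^\star)\bigr],
\end{align*}
with $F_W(x):=(W-\tfrac{xs}{n})(1+s^{\frac{2}{n}-2}(W-\tfrac{xs}{n})^2)^{-p/2}$ as in the preceding proof. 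A direct differentiation reveals
\begin{align*}
\frac{\partial F_W(\mu^\star)}{\partial W}=\bigl(1+s^{\frac{2}{n}-2}(W-\tfrac{\mu^\star s}{n})^2\bigr)^{-\frac{p}{2}-1}\bigl[1+(1-p)s^{\frac{2}{n}-2}(W-\tfrac{\mu^\star s}{n})^2\bigr]\geq 0
\end{align*}
for $p<1$, and this is bounded by some constant $C_1=C_1(n,p,R,\mu^\star,\|\overline W\|_\infty+\|\underline W\|_\infty)$ on $[0,R^n]\times[0,T')$ (the boundary value $W(0,t)=0$ keeps the expression finite as $s\to 0$ since $W-\tfrac{\mu^\star s}{n}=O(s)$). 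The mean value theorem together with $\underline W-\overline W\leq \epsilon e^{\lambda t_0}$ then gives $F_{\overline W}(\mu^\star)-F_{\underline W}(\mu^\star)\geq -C_1\epsilon e^{\lambda t_0}$. Plugging this in together with the nonpositivity of the diffusion term and the bound $\overline U_s\leq C_2:=\|\overline U_s\|_{L^\infty([0,R^n]\times[0,t_0])}$ produces
\begin{align*}
0\leq -\lambda\epsilon e^{\lambda t_0}+nC_1C_2\epsilon e^{\lambda t_0},
\end{align*}
which is violated once $\lambda>nC_1C_2$. This forces $t_0=T$, and sending $\epsilon\downarrow 0$ delivers \eqref{eq2_10}.

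The main obstacle I expect is justifying the second-derivative sign at $(s_0,t_0)$: the hypothesis only provides $\underline U,\overline U\in W^{2,\infty}_{loc}$ in $s$, so $\underline U_{ss}(s_0,t_0)$ and $\overline U_{ss}(s_0,t_0)$ need not exist pointwise. I would resolve this by mollifying $\underline U,\overline U,\underline W,\overline W$ in $s$, applying the argument above to the $C^2$ approximants (whose defects in satisfying $\mathcal P,\mathcal Q$ vanish as the mollification scale $\delta\downarrow 0$ since $F_W$ and its partials are locally Lipschitz on the relevant compact range), and passing to the limit $\delta\downarrow 0$. Alternatively, one tests \eqref{eq2_8} against a nonnegative bump concentrated near $(s_0,t_0)$, distributes the second derivative via integration by parts, and runs the contradiction on the resulting integral quantities.
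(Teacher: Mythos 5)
Your strategy coincides with the paper's in all of its essential components: an exponential-in-time shift $\varepsilon e^{\lambda t}$, a first-touching-time argument at an interior maximum, the use of $p<1$ to force $\partial F_W/\partial W \ge 0$ (equivalently, monotonicity of $h(x)=x(1+x^2)^{-p/2}$) so that the cross-coupling in $\underline W-\overline W$ can be absorbed by a one-sided Lipschitz bound, and the final contradiction against a sufficiently large $\lambda$. The one genuine point of divergence is how you deal with the fact that $W^{2,\infty}_{loc}$-regularity does not give pointwise second derivatives at the touching point $s_0$. You flag this honestly but only sketch a mollification or weak-testing remedy. The mollification route is more delicate than you suggest: $\mathcal{P}$ and $\mathcal{Q}$ are nonlinear in $(\varphi,\psi)$, the degenerate coefficient $n^2 s^{2-2/n}$ does not commute with convolution, the touching point can move under regularization, and you would have to quantify the resulting defects with enough uniformity to survive the $\delta\downarrow 0$ limit. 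The paper avoids all of this with a short, self-contained device: since $X_s(s_0,t_0)=0$ and $X(\cdot,t_0)\in W^{2,\infty}_{loc}$, one writes $X_s(s,t_0)=\int_{s_0}^s X_{ss}(\sigma,t_0)\,d\sigma$ and observes that if $X_{ss}>0$ a.e.\ immediately to the right of $s_0$ then $X_s>0$ there, contradicting maximality; hence there is a sequence $s_j\searrow s_0$ of Lebesgue points with $X_{ss}(s_j,t_0)\le 0$. Evaluating the a.e.-valid differential inequality at $s_j$ and letting $j\to\infty$ (using $C^1$ continuity of $X_t$, $X_s$, $\underline W$, $\overline W$) closes the argument without regularizing the nonlinearity. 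Replacing your mollification sketch with that sequence argument would remove the only loose end in your otherwise correct proposal; everything else, including your computation of $\partial F_W/\partial W$ and the choice $\lambda>nC_1C_2$, matches the paper.
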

\begin{proof}
Let $T_0 \in (0, T)$. For $\lambda>0$ and $\varepsilon> 0$ to be specified below, we define the functions $X(s,t)$ and $Y(s,t)$
	\begin{align}\label{XY}
		X(s, t) := \underline{U}(s, t) - \overline{U}(s, t) - \varepsilon \mathrm{e}^{\lambda t}, \quad 
        Y(s, t) := \underline{W}(s, t) - \overline{W}(s, t) - \varepsilon \mathrm{e}^{\lambda t}
	\end{align}
for $t \in [0, T_0]$ and $s \in [0, R^n]$. By \eqref{eq2_9} and \eqref{eq2 9-1}, we know that $X(0,t), Y(0,t)<0$, and $X(R^n,t), Y(R^n,t)<0$ for all $t \in [0, T_0]$, as well as $X(s,0), Y(s,0)<0$ for all $s \in [0, R^n]$.
	
We claim that
\begin{align}\label{eq2_12}
X(s, t) < 0 \text{\quad and\quad} Y(s, t) < 0, 
\quad (s,t) \in  [0, R^n] \times [0, T_0] .
\end{align}
To verify this, we assume by contradiction that \eqref{eq2_12} is false.

\textbf{Case 1}. One can find $s_X\in (0, R^n)$ and $t_X \in (0, T_0]$ such that
\begin{align}\label{eq2_13-1}
\max_{(s, t) \in [0, R^n] \times [0, t_X]} \{X(s, t), Y(s, t)\}
 = X(s_X, t_X) 
 = 0.
 \end{align}
Then, we have
	\begin{align}\label{eq2_15}
		X_t(s_X, t_X) \geq 0
	\end{align}
	and
	\begin{align}\label{eq2_16}
		X_s(s_X, t_X) = 0.
	\end{align}
	Moreover, since $X(\cdot, t_X) \in W_{\text{loc}}^{2, \infty}\left((0, R^n)\right)$, we can find a null set $N(t_X) \subset (0, R^n)$ such that $X_{ss}(s, t_X)$ exists for $s \in (0, R^n) \backslash N(t_X)$. Due to \eqref{eq2_16}, we derive that
\begin{align}\label{eq2_17}
X_s(s, t_X) = \int_{s_X}^s X_{ss}(\sigma, t_X) \, d\sigma, 
\quad  s \in (0, R^n) \backslash N(t_X).
\end{align}
As $X(\cdot, t_X)$ attains its maximum at $s_X$ by \eqref{eq2_13-1}, the identity \eqref{eq2_17} requires that there exists $\left(s_j\right)_{j \in \mathbb{N}} \subset (s_X, R^n) \backslash N(t_X)$ such that $s_j \searrow s_X$ as $j \rightarrow \infty$ and
\begin{align}\label{eq2_18}
X_{ss}(s_j, t_X) \leq 0, \quad j \in \mathbb{N},
\end{align}
for otherwise (\ref{eq2_17}) would imply that $X_s(s, t_X)>0$ for $s\in(s_X,s_{\star})$ with some $s_{\star}\in (s_X,R^n)$, which would clearly contradict (\ref{eq2_13-1}). According to \eqref{eq2_8}, \eqref{eq2_18} and the definition of $h$ in (\ref{h}), we obtain
\begin{align}\label{X}
X_t(s_j, t_X) 
=& \underline{U}_t(s_j, t_X) - \overline{U}_t(s_j, t_X) - \lambda \varepsilon \mathrm{e}^{\lambda t_X} \nonumber\\
\leq& n^2 s_j^{2 - \frac{2}{n}} X_{ss}(s_j, t_X) - \lambda \varepsilon \mathrm{e}^{\lambda t_X} + \frac{n \underline{U}_s\left(s_j, t_X\right)\left(\underline{W}\left(s_j, t_X\right)-\frac{\mu^{\star} s_j}{n}\right)}{\left(1+s_j^{\frac{2}{n}-2}\left(\underline{W}\left(s_j, t_X\right)-\frac{\mu^{\star} s_j}{n}\right)^2\right)^{\frac{p}{2}}} \nonumber \\  
&-\frac{n \overline{U}_s\left(s_j, t_X\right)\left(\overline{W}\left(s_j, t_X\right)-\frac{\mu^{\star} s_j}{n}\right)}{\left(1+s_j^{\frac{2}{n}-2}\left(\overline{W}\left(s_j, t_X\right)-\frac{\mu^{\star} s_j}{n}\right)^2\right)^{\frac{p}{2}}}\nonumber \\
\leq & \frac{n \underline{U}_s\left(s_j, t_X\right)\left(\underline{W}\left(s_j, t_X\right)-\frac{\mu^{\star} s_j}{n}\right)}{\left(1+s_j^{\frac{2}{n}-2}\left(\underline{W}\left(s_j, t_X\right)-\frac{\mu^{\star} s_j}{n}\right)^2\right)^{\frac{p}{2}}}
-\frac{n \overline{U}_s\left(s_j, t_X\right)\left(\overline{W}\left(s_j, t_X\right)-\frac{\mu^{\star} s_j}{n}\right)}{\left(1+s_j^{\frac{2}{n}-2}\left(\overline{W}\left(s_j, t_X\right)-\frac{\mu^{\star} s_j}{n}\right)^2\right)^{\frac{p}{2}}}\nonumber\\
& - \lambda \varepsilon \mathrm{e}^{\lambda t_X} \nonumber \\ 
=&n \underline{U}_s\left(s_j, t_X\right)s_j^{1-\frac{1}{n}}h(\gamma_1(s_j))
-n \overline{U}_s\left(s_j, t_X\right)s_j^{1-\frac{1}{n}}h(\gamma_2(s_j)) 
- \lambda \varepsilon \mathrm{e}^{\lambda t_X},
\end{align}
where $\gamma_1(s_j)=s_j^{\frac{1}{n}-1}\left(\underline{W}\left(s_j, t_X\right)-\frac{\mu^{\star} s_j}{n}\right) $ and $\gamma_2(s_j)=s_j^{\frac{1}{n}-1}\left(\overline{W}\left(s_j, t_X\right)-\frac{\mu^{\star} s_j}{n}\right)$. Using the Newton-Leibniz Formula, we derive that
\begin{align*}
-\frac{\mu^{\star}R}{n}\leq
\gamma_1(s_X)=s_X^{\frac{1}{n}}(\frac{\underline{W}\left(s_X, t_X\right)}{s_X}-\frac{\mu^{\star}}{n})
\leq R\|\underline{W}_s\|_{L^{\infty}(\Omega)}
\end{align*}
and
\begin{align*}
-\frac{\mu^{\star}R}{n}\leq
\gamma_2(s_X)=s_X^{\frac{1}{n}}(\frac{\overline{W}\left(s_X, t_X\right)}{s_X}-\frac{\mu^{\star}}{n})
\leq R\|\overline{W}_s\|_{L^{\infty}(\Omega)},
\end{align*}
which imply that
\begin{align*}
-\frac{\mu^{\star}R}{n}\leq
\gamma_3=\gamma_2(s_X)+\theta(\gamma_1(s_X)-\gamma_2(s_X))
\leq R(\|\underline{W}_s\|_{L^{\infty}(\Omega)}+\|\overline{W}_s\|_{L^{\infty}(\Omega)}),
\end{align*}
where $\theta\in(0,1)$. Combining this with the continuity of $h^{\prime}(x)$, we can find a positive constant $
c_1=c_1(\|\underline{W}_s\|_{L^{\infty}(\Omega)},\|\overline{W}_s\|_{L^{\infty}(\Omega)})
$
such that 
\begin{align}\label{hprime3}
h^{\prime}(\gamma_3)\leq c_1.
\end{align}
Thanks to the facts that $\underline{U}, \overline{U}, \underline{W}, \overline{W} \in C^1([0, R^n] \times (0, T))$ and $\underline{U}_s(s_X, t_X) = \overline{U}_s(s_X, t_X)$ from \eqref{eq2_16}, along with $\underline{U}_s(s_X, t_X) \geq 0$ and (\ref{hprime3}), we take $j \rightarrow \infty$ and apply the mean value theorem to see that
\begin{align*}
X_t(s_X, t_X) &\leq n \underline{U}_s(s_X,t_X)s_X^{1-\frac{1}{n}}(h(\gamma_1(s_X))-h(\gamma_2(s_X))) - \lambda \varepsilon \mathrm{e}^{\lambda t_X} \\
&=n \underline{U}_s(s_X,t_X)s_X^{1-\frac{1}{n}}h^{\prime}(\gamma_3)\left(\gamma_1(s_X)-\gamma_2(s_X)\right)- \lambda \varepsilon \mathrm{e}^{\lambda t_X}  \\
&=n \underline{U}_s(s_X,t_X)h^{\prime}(\gamma_3)\left(\underline{W}(s_X, t_X) - \overline{W}(s_X, t_X)\right)- \lambda \varepsilon \mathrm{e}^{\lambda t_X}  \\
&\leq c_1n \underline{U}_s(s_X, t_X)  \left(Y(s_X, t_X) + \varepsilon \mathrm{e}^{\lambda t_X} \right) - \lambda \varepsilon \mathrm{e}^{\lambda t_X}.
\end{align*}
We choose
\begin{align}\label{eq2_11}
\lambda \geq 2c_1n\|\underline{U}_s\|_{L^{\infty}([0, R^n] \times [0, T_0])}.
\end{align}
Since $ Y(s_X, t_X) + \varepsilon \mathrm{e}^{\lambda t_X} \leq \varepsilon \mathrm{e}^{\lambda t_X}$ by \eqref{eq2_13-1}, along with \eqref{eq2_11}, we have
\begin{align*}
X_t(s_X, t_X) \leq c_1n \underline{U}_s(s_X, t_X) \varepsilon \mathrm{e}^{\lambda t_X} - \lambda \varepsilon \mathrm{e}^{\lambda t_X} \leq -\frac{\lambda \varepsilon \mathrm{e}^{\lambda t_X}}{2},
\end{align*}
 which is absurd in view of \eqref{eq2_15}. 
 
\textbf{Case 2}. One can find $s_Y\in (0, R^n)$ and $t_Y \in (0, T_0]$ such that
\begin{align}\label{eq2_13}
\max_{(s, t) \in [0, R^n] \times [0, t_Y]} \{X(s, t), Y(s, t)\}
 = Y(s_Y, t_Y) 
 = 0,
\end{align}
which implies that $Y_t(s_Y, t_Y) \geq 0$.
Similar to the case 1, we arrive at a contradiction $Y_t(s_Y, t_Y)<0 $. 

In summary, we obtain \eqref{eq2_12}. By letting $\varepsilon \searrow 0$ and $T_0 \nearrow T$ in (\ref{XY}), we arrive at \eqref{eq2_10}.
\end{proof}

\section{ Construction of subsolutions}\label{subsolution}

The goal of this section is to prove Theorem~\ref{thm1_1}. Our approach is similar to that in \cite{2025-JDE-TaoWinkler}; however, the parameters $\alpha$ and $\beta$ used in our construction are chosen differently.

\begin{lem}\label{alphabeta-lem}
Let $n \geq 3$. Assume that $ p,q <1$ and satisfy $(\ref{pq})$. Then one can find constants $\alpha, \beta \in (0, 1-\frac{1}{n})$ and $\delta \in (0,\frac{1}{n})$ such that
\begin{align}\label{pq-2}
(1-\beta)(1-p)-\delta>0, \quad (1-\alpha)(1-q)-\delta>0
\end{align}
and
\begin{align}\label{pq-3}
(\frac{1}{n}+\beta-1)p+1-\beta-\frac{2}{n}>0, \quad (\frac{1}{n}+\alpha-1)q+1-\alpha-\frac{2}{n}>0.
\end{align}
\end{lem}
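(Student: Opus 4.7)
The plan is a direct construction in the order $\beta,\alpha$, then $\delta$. For $\beta$, I would first rewrite the first inequality of $(\ref{pq-3})$ as
\begin{align*}
\Bigl(\tfrac{1}{n}+\beta-1\Bigr)p + 1-\beta-\tfrac{2}{n}
\;=\; -\beta(1-p) + \tfrac{1}{n}\bigl[(n-2)-(n-1)p\bigr] \;>\; 0,
\end{align*}
and, using $1-p>0$ (from the assumption $p<1$), isolate $\beta$ to obtain the equivalent condition
\begin{align*}
\beta \;<\; \beta_{\max}(p) \;:=\; \frac{(n-2)-(n-1)p}{n(1-p)}.
\end{align*}
The hypothesis $p<\frac{n-2}{n-1}$ from $(\ref{pq})$ makes the numerator positive, so $\beta_{\max}(p)>0$; the bound $\beta_{\max}(p)<1-\frac{1}{n}$ reduces to $(n-2)-(n-1)p < (n-1)(1-p)$, i.e.\ $n-2<n-1$, which is trivial. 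An identical rearrangement for the second inequality of $(\ref{pq-3})$ produces $\alpha_{\max}(q)$ with $0<\alpha_{\max}(q)<1-\frac{1}{n}$.

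Next, I would pick any $\beta\in\bigl(0,\beta_{\max}(p)\bigr)$ and any $\alpha\in\bigl(0,\alpha_{\max}(q)\bigr)$. By construction these lie in $(0,1-\frac{1}{n})$ and satisfy $(\ref{pq-3})$. Since $\beta<1$ and $p<1$, we get $(1-\beta)(1-p)>0$, and analogously $(1-\alpha)(1-q)>0$. I would then select
\begin{align*}
0 \;<\; \delta \;<\; \min\Bigl\{\tfrac{1}{n},\; (1-\beta)(1-p),\; (1-\alpha)(1-q)\Bigr\},
\end{align*}
which is possible because all three quantities inside the minimum are strictly positive. This yields $\delta\in(0,\frac{1}{n})$ and both inequalities of $(\ref{pq-2})$, finishing the construction.

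There is no substantive obstacle here: the lemma is an elementary feasibility check for a linear system of inequalities in $(\alpha,\beta,\delta)$, and the only point requiring care is tracking the sign of $1-p$ (resp.\ $1-q$) when dividing through, which is controlled by the explicit assumption $p,q<1$.
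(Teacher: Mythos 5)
Your proposal is correct, and it takes a slightly different (and arguably more informative) route than the paper. The paper argues by continuity: as $(\alpha,\beta,\delta)\to(0,0,0)$, the left-hand sides of \eqref{pq-2} and \eqref{pq-3} converge to $1-p$, $1-q$, $\tfrac{n-1}{n}\bigl(\tfrac{n-2}{n-1}-p\bigr)$ and $\tfrac{n-1}{n}\bigl(\tfrac{n-2}{n-1}-q\bigr)$, all of which are strictly positive under \eqref{pq}, so the inequalities persist for all sufficiently small positive $\alpha,\beta,\delta$. You instead solve the two inequalities in \eqref{pq-3} exactly for $\beta$ and $\alpha$, obtaining the explicit admissible intervals $\bigl(0,\beta_{\max}(p)\bigr)$ and $\bigl(0,\alpha_{\max}(q)\bigr)$ with
\[
\beta_{\max}(p)=\frac{(n-2)-(n-1)p}{n(1-p)},\qquad
\alpha_{\max}(q)=\frac{(n-2)-(n-1)q}{n(1-q)},
\]
checking that these thresholds lie in $(0,1-\tfrac{1}{n})$, and then choose $\delta$ small enough to satisfy \eqref{pq-2}. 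Your algebra is correct (the rewrite $-\beta(1-p)+\tfrac{1}{n}\bigl[(n-2)-(n-1)p\bigr]>0$ is exact, positivity of the numerator is precisely \eqref{pq}, and the comparison $\beta_{\max}(p)<1-\tfrac{1}{n}$ reduces to $n-2<n-1$). The paper's continuity argument is shorter; your construction buys explicit thresholds for $\alpha$ and $\beta$ and makes the $\delta<\tfrac1n$ requirement visibly built into the choice of $\delta$, whereas the paper's choice $\delta_\star\in(0,\tfrac12)$ requires an implicit further shrinking to honor $\delta<\tfrac1n$ when $n\geq 3$. Both arguments are sound; yours is the more self-contained.
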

\begin{proof}
When $(\alpha, \beta, \delta)\rightarrow (0, 0, 0) $, it follows from (\ref{pq}) and $n\geq 3$ that the following limits hold: $(1-\beta)(1-p)-\delta \rightarrow 1-p>0$, $(1-\alpha)(1-q)-\delta \rightarrow 1-q>0$, $(\frac{1}{n}+\beta-1)p+1-\beta-\frac{2}{n} \rightarrow \frac{n-1}{n}(\frac{n-2}{n-1}-p)>0$ and $(\frac{1}{n}+\alpha-1)q+1-\alpha-\frac{2}{n}\rightarrow \frac{n-1}{n}(\frac{n-2}{n-1}-q)>0$. Thus, we can find $\alpha_{\star}, \beta_{\star}, \delta_{\star} \in (0,\frac{1}{2})$ such that (\ref{pq-2}) and (\ref{pq-3}) hold for $\alpha \in (0,\alpha_{\star})$, $\beta \in (0,\beta_{\star})$ and $\delta \in (0,\delta_{\star})$. 
\end{proof}
Now we specify the subsolutions that take the same form as in \cite{2025-JDE-TaoWinkler}. 
Let $\alpha, \beta \in (0,1-\frac{1}{n})$ and $\delta \in (0,\frac{2}{n})$ be taken from Lemma~\ref{alphabeta-lem}. Define $l$ by
\begin{align}\label{eq3_4}
l = \frac{\mu_{\star} R^n}{n\mathrm{e}^{\frac{1}{\mathrm{e}}}(R^n + 1)}
\end{align}
with $\mu_{\star}$ as defined in \eqref{eq2_6}. For any $y \in C^1([0, T))$ with $y(t)>\frac{1}{R^n}$ for all $t \in (0,T)$, we introduce
\begin{align}\label{phi}
	& \Phi(s, t) =
	\begin{cases}
		l y^{1 - \alpha}(t) s, & t \in [0, T), s \in \left[0, \frac{1}{y(t)}\right], \\
		l \alpha^{-\alpha} \cdot \left(s - \frac{1 - \alpha}{y(t)}\right)^\alpha, & t \in [0, T), s \in \left(\frac{1}{y(t)}, R^n\right],
	\end{cases} 
\end{align}
\begin{align}\label{psi}
	& \Psi(s, t) =
	\begin{cases}
		l y^{1 - \beta}(t) s, & t \in [0, T), s \in \left[0, \frac{1}{y(t)}\right], \\
		l \beta^{-\beta} \cdot \left(s - \frac{1 - \beta}{y(t)}\right)^\beta, & t \in [0, T), s \in \left(\frac{1}{y(t)}, R^n\right].
	\end{cases}
\end{align}
It can easily be verified that 
\begin{align*}
\Phi,\Psi \in C^1\left(\left[0, R^n\right] \times[0, T)\right) \cap C^0\left([0, T) ; W^{2, \infty}\left(\left(0, R^n\right)\right)\right)
\end{align*}
and
\begin{align*}
\Phi(\cdot,t),\Psi(\cdot,t) \in C^2\Big( [0, R^n] \setminus \Big\{\frac{1}{y(t)}\Big\} \Big), \quad \text{ for all } t \in (0,T)
\end{align*}
with
\begin{align}\label{phis}
	& \Phi_s(s, t) =
	\begin{cases}
		l y^{1 - \alpha}(t), & t \in (0, T), s \in \left(0, \frac{1}{y(t)}\right), \\
		l \alpha^{1 - \alpha} \cdot \left(s - \frac{1 - \alpha}{y(t)}\right)^{\alpha - 1}, & t \in (0, T), s \in \left(\frac{1}{y(t)}, R^n\right),
	\end{cases} 
\end{align}
\begin{align}\label{psis}
	& \Psi_s(s, t) =
	\begin{cases}
		l y^{1 - \beta}(t), & t \in (0, T), s \in \left(0, \frac{1}{y(t)}\right), \\
		l \beta^{1 - \beta} \cdot \left(s - \frac{1 - \beta}{y(t)}\right)^{\beta - 1}, & t \in (0, T), s \in \left(\frac{1}{y(t)}, R^n\right),
	\end{cases}
\end{align}
and
\begin{align}\label{phiss}
	& \Phi_{ss}(s, t) =
	\begin{cases}
		0, & t \in (0, T), s \in \left(0, \frac{1}{y(t)}\right), \\
		l \alpha^{1 - \alpha} (\alpha - 1) \cdot \left(s - \frac{1 - \alpha}{y(t)}\right)^{\alpha - 2}, & t \in (0, T), s \in \left(\frac{1}{y(t)}, R^n\right),
	\end{cases} 
\end{align}
\begin{align}\label{psiss}
	& \Psi_{ss}(s, t) =
	\begin{cases}
		0, & t \in (0, T), s \in \left(0, \frac{1}{y(t)}\right), \\
		l \beta^{1 - \beta} (\beta-1) \cdot \left(s - \frac{1 - \beta}{y(t)}\right)^{\beta - 2}, & t \in (0, T), s \in \left(\frac{1}{y(t)}, R^n\right),
	\end{cases}
\end{align}
as well as
\begin{align}\label{phit}
	& \Phi_t(s, t) =
	\begin{cases}
		l (1 - \alpha)  y^{-\alpha}(t)y^{\prime}(t) s, & t \in (0, T), s \in \left(0, \frac{1}{y(t)}\right), \\
		l \alpha^{1 - \alpha} (1 - \alpha) \cdot \left(s - \frac{1 - \alpha}{y(t)}\right)^{\alpha - 1}  \frac{y^{\prime}(t)}{y^2(t)}, & t \in (0, T), s \in \left(\frac{1}{y(t)}, R^n\right),
	\end{cases} 
\end{align}
\begin{align}\label{psit}
	& \Psi_t(s, t) =
	\begin{cases}
		l (1 - \beta) y^{-\beta}(t)y^{\prime}(t) s, & t \in (0, T), s \in \left(0, \frac{1}{y(t)}\right), \\
		l \beta^{1 - \beta} (1 - \beta) \cdot \left(s - \frac{1 - \beta}{y(t)}\right)^{\beta - 1}  \frac{y^{\prime}(t)}{y^2(t)}, & t \in (0, T), s \in \left(\frac{1}{y(t)}, R^n\right).
	\end{cases}
\end{align}
For sufficiently large $\theta>1$ to be determined later, we define
\begin{equation}\label{eq3_7}
	\begin{cases}
		\begin{array}{ll}
			\underline{U}(s, t) := \mathrm{e}^{-\theta t} \Phi(s, t), & s \in \left[0, R^n\right], t \in [0, T), \\
			\underline{W}(s, t) := \mathrm{e}^{-\theta t} \Psi(s, t), & s \in \left[0, R^n\right], t \in [0, T).
		\end{array}
	\end{cases}
\end{equation}

In the following, we aim to prove $\mathcal{P}[\underline{U}, \underline{W}]\leq 0$ and $\mathcal{Q}[\underline{U}, \underline{W}]\leq 0$ for all $t \in (0,T) \cap \left(0, \frac{1}{\theta}\right)$ and a.e, $s\in(0,R^n)$. We divide $(0,R^n)$ into three regions and begin the proof by considering the inner region $(0,\frac{1}{y(t)})$.



\begin{lem}\label{lem3.3}
Let $\Omega=B_R(0) \subset \mathbb{R}^n$ with $n \geq 3$, and let $\alpha$, $\beta$, $\delta $ be as in Lemma~\ref{alphabeta-lem}. Assume that \eqref{f} and \eqref{g} hold with $p,q <1$ satisfying $(\ref{pq})$. There exists $y_{\star}=y_{\star}(\alpha,\beta,\mu^{\star},l)>\max\{1,\frac{1}{R^n}\}$ such that if $T>0$ and a nondecreasing function $y(t) \in C^1([0, T))$ satisfies 
\begin{align}\label{y1}
\begin{cases}
\begin{aligned}
y^{\prime}(t) \leq \min\Big\{ &
\min\{2^{-1},2^{-\frac{p}{2}-1}\} n \mathrm{e}^{-2} l,\ 
2^{\frac{p}{2}-1} n \mathrm{e}^{p-2} l^{1-p} R^{-p}, \\ 
&\min\{2^{-1},2^{-\frac{q}{2}-1}\}n \mathrm{e}^{-2} l,\ 
2^{\frac{q}{2}-1} n \mathrm{e}^{q-2} l^{1-q} R^{-q}  
\Big\} y^{1+\delta}(t), \quad t \in (0,T),
\end{aligned} \\ 
y(0) > y_{\star},
\end{cases}
\end{align}
then, for arbitrary $\theta>0$, the functions $\underline{U}$ and $\underline{W}$ from $(\ref{eq3_7})$ satisfy
\begin{align*}
\mathcal{P}[\underline{U}, \underline{W}](s, t) \leq 0, \quad \mathcal{Q}[\underline{U}, \underline{W}](s, t) \leq 0, 
\end{align*}
for all $t \in (0,T) \cap \left(0, \frac{1}{\theta}\right)$ and $s \in \big(0, \frac{1}{y(t)}\big)$.
\end{lem}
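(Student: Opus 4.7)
The plan is to exploit that on the inner strip $s\in(0,1/y(t))$ the profiles $\Phi,\Psi$ given by (\ref{phi})--(\ref{psi}) are linear in $s$, so $\underline U_{ss}=\underline W_{ss}=0$ identically and the subsolution inequalities collapse to pointwise algebraic estimates. Plugging the explicit expressions (\ref{phis}), (\ref{phit}), and $\Psi$ into (\ref{eq2.3}), cancelling the positive factor $e^{-\theta t}ls$, multiplying through by $y^{\alpha}$, and discarding the non-positive term $-\theta\underline U$, the inequality $\mathcal{P}[\underline U,\underline W]\le 0$ reduces to
$$
(1-\alpha)\,y'(t)\ \le\ n\,y(t)\,B(s,t)\,\bigl(1+s^{2/n}B(s,t)^{2}\bigr)^{-p/2},
$$
where $B(s,t):=e^{-\theta t}l\,y^{1-\beta}(t)-\mu^{\star}/n$. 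The analogous reduction for $\mathcal Q[\underline U,\underline W]\le 0$ is obtained after the exchange $(\alpha,p)\leftrightarrow(\beta,q)$.

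I then choose $y_{\star}$ so large that $e^{-1}l\,y_{\star}^{1-\beta}\ge 2\mu^{\star}/n$ and the analogue with $1-\alpha$ in place of $1-\beta$ both hold. Since $t<1/\theta$ forces $e^{-\theta t}\ge e^{-1}$, this yields the clean lower bound $B(s,t)\ge \frac{l}{2e}y^{1-\beta}(t)>0$, which both makes the chemotactic term coercive and drives the remaining estimates. The crux of the proof is then a case split on the size of $s^{2/n}B^{2}$. If $s^{2/n}B^{2}\le 1$, the elementary inequality $(1+x)^{-p/2}\ge\min\{1,2^{-p/2}\}$ on $[0,1]$ produces a lower bound for the right-hand side of order $\min\{2^{-1},2^{-p/2-1}\}\cdot\frac{nl}{e}y^{2-\beta}$, which matches the first coefficient inside the min in (\ref{y1}). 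If $s^{2/n}B^{2}>1$---a genuine case only for $p>0$, since otherwise $(1+\cdot)^{-p/2}\ge 1$ and the previous bound applies globally---I use $(1+x)^{-p/2}\ge(2x)^{-p/2}$ for $x\ge 1$ together with $s^{-p/n}\ge R^{-p}$ (valid because $p>0$ and $s\le R^{n}$) and $B\le l\,y^{1-\beta}$ to obtain a lower bound of order $2^{p/2-1}e^{p-1}nR^{-p}l^{1-p}\,y^{1+(1-\beta)(1-p)}$, which matches the second coefficient in (\ref{y1}).

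Both lower bounds take the shape $C\cdot y^{1+\eta}$ with $\eta\in\{1-\beta,\,(1-\beta)(1-p)\}$. Since $\beta<1-\tfrac1n$ and $\delta<\tfrac1n$, the exponent $1-\beta$ trivially exceeds $\delta$, while $(1-\beta)(1-p)>\delta$ is the first half of (\ref{pq-2}). Consequently, the hypothesis on $y'$ in (\ref{y1}) closes the estimate after enlarging $y_{\star}$ once more to absorb the leftover factors $(1-\alpha)e^{-1}$ into $y^{1-\beta-\delta}$ and $y^{(1-\beta)(1-p)-\delta}$ respectively. The inequality $\mathcal Q[\underline U,\underline W]\le 0$ is dispatched by the identical argument after the exchange $(\alpha,p)\leftrightarrow(\beta,q)$, contributing the third and fourth coefficients inside the min in (\ref{y1}).

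The main obstacle is not any individual estimate but rather the bookkeeping across the sign of $p$ (and $q$): the Case B subcase only needs to be treated when $p>0$, since for $p\le 0$ the global bound $(1+\cdot)^{-p/2}\ge 1$ absorbs it into Case A, and the factor $R^{-p}$ in (\ref{y1}) is precisely the price paid for the crude bound $s^{-p/n}\ge R^{-p}$, which is again only available for $p\ge 0$. Packaging all four regimes into a single min in (\ref{y1}) is the delicate point that keeps the statement uniform in the sign of $p,q$; everything else is essentially routine.
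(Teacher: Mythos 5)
Your proof is correct and follows essentially the same route as the paper: cancel the positive factor $\mathrm{e}^{-\theta t}ls$, lower-bound $\underline W-\tfrac{\mu^\star s}{n}$ by $\tfrac{1}{2}\mathrm{e}^{-\theta t}ly^{1-\beta}s$ after enlarging $y_\star$, and then split on $s^{2/n}B^2$ versus $1$ (the paper's $D(s)$ versus $1$), which reproduces the two coefficients in \eqref{y1} for each of $p,q$ and closes via \eqref{pq-2}. One small slip in the writeup: in Case B you need the \emph{lower} bound $B\geq\frac{l}{2\mathrm{e}}y^{1-\beta}$, not the upper bound $B\le l\,y^{1-\beta}$ that you state, to control $B^{1-p}$ from below (since $1-p>0$); the constant $\mathrm{e}^{p-1}$ you report confirms this is what you actually used.
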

\begin{proof} 
Due to $y_{\star}>\frac{1}{R^n}$ and $y^{\prime}(t) \geq 0$, we know that $\frac{1}{y(t)}<R^n$. Owing to $t \in (0,T) \cap \left(0, \frac{1}{\theta}\right)$, we have
\begin{align}\label{thetat}
\theta t<1.
\end{align}
The choices of $\alpha$, $\beta$ and $\delta$ allow us to choose $y_{\star}>\max\{1,\frac{1}{R^n}\} $ sufficiently large so that 
\begin{align}\label{y0}
{y_{\star}}^{1-\beta} > \frac{2\mu^{\star}\mathrm{e}}{nl},\quad
{y_{\star}}^ {1-\beta-\frac{1}{n}} > \frac{2\mathrm{e}}{l}, 
\end{align}
and
\begin{align}\label{y0-1}
{y_{\star}}^{1-\alpha} > \frac{2\mu^{\star}\mathrm{e}}{nl},\quad
{y_{\star}}^ {1-\alpha-\frac{1}{n}} > \frac{2\mathrm{e}}{l}.
\end{align}
In view of (\ref{thetat}) and the first restriction in \eqref{y0}, we infer that
\begin{align}\label{W}
\underline{W}- \frac{\mu^{\star} s}{n}
=&\frac{\underline{W}}{2}+\frac{\underline{W}}{2} - \frac{\mu^{\star} s}{n} \notag \\
=&\frac{\underline{W}}{2}+\frac{\mathrm{e}^{-\theta t} l y^{1 - \beta}(t) s}{2} - \frac{\mu^{\star} s}{n} \notag \\
\geq& \frac{\underline{W}}{2}+\frac{\mathrm{e}^{-1} l {y_{\star}}^{1 - \beta} s}{2} 
  -\frac{\mu^{\star} s}{n} \notag \\
\geq& \frac{\underline{W}}{2}.
\end{align}
Therefore, it follows from (\ref{thetat}), (\ref{W}), (\ref{h}) and (\ref{hprime}) that
\begin{align}\label{eq3_10}
\mathcal{P}[\underline{U}, \underline{W}](s, t) 
&= \underline{U}_t - n^2 s^{2 - \frac{2}{n}} \underline{U}_{ss} - n \underline{U}_s \cdot \Big(\underline{W} - \frac{\mu^{\star} s}{n}\Big) f\left(s^{\frac{2}{n}-2}\Big(\underline{W}-\frac{\mu^{\star} s}{n}\Big)^2\right)
\nonumber \\
&= -\theta \mathrm{e}^{-\theta t} l y^{1 - \alpha}(t) s +
\mathrm{e}^{-\theta t} l (1 - \alpha) y^{-\alpha}(t)y^{\prime}(t) s \nonumber \\
&\quad - n \mathrm{e}^{-\theta t} l y^{1 - \alpha}(t) \Big(\underline{W} - \frac{\mu^{\star} s}{n}\Big)\left(1+s^{\frac{2}{n}-2}\Big(\underline{W}-\frac{\mu^{\star} s}{n}\Big)^2\right)^{-\frac{p}{2}} \nonumber \\
&\leq \mathrm{e}^{-\theta t} l  y^{-\alpha}(t)y^{\prime}(t) s
- n \mathrm{e}^{-\theta t} l y^{1 - \alpha}(t) \Big(\underline{W} - \frac{\mu^{\star} s}{n}\Big)\left(1+s^{\frac{2}{n}-2}\Big(\underline{W}-\frac{\mu^{\star} s}{n}\Big)^2\right)^{-\frac{p}{2}}   \nonumber \\
&= \mathrm{e}^{-\theta t} l  y^{-\alpha}(t)y^{\prime}(t) s
- n \mathrm{e}^{-\theta t} l y^{1 - \alpha}(t) s^{1-\frac{1}{n}} h\left(s^{\frac{1}{n}-1}\Big(\underline{W} - \frac{\mu^{\star} s}{n}\Big)\right)\nonumber \\
& \leq  l  y^{-\alpha}(t)y^{\prime}(t) s
- n \mathrm{e}^{-1} l y^{1 - \alpha}(t) s^{1-\frac{1}{n}} h\left(\frac{s^{\frac{1}{n}-1} \underline{W}}{2} \right)\nonumber \\
& =l  y^{-\alpha}(t)y^{\prime}(t) s
- n \mathrm{e}^{-1} l y^{1 - \alpha}(t)  \frac{\underline{W}}{2}\big(1+s^{\frac{2}{n}-2}\frac{ \underline{W}^2}{4}\big)^{-\frac{p}{2}}.
\end{align}

To handle the second term on the right side of (\ref{eq3_10}), for given $t \in (0,T) \cap \left(0, \frac{1}{\theta}\right)$ , we introduce 
\begin{align}\label{D}
D(s):= \frac{s^{\frac{1}{n}-1} \underline{W}(s,t)}{2}=
		\frac{1}{2}s^{\frac{1}{n}}\mathrm{e}^{-\theta t} l y^{1 - \beta}(t), \quad s \in \big[0, \frac{1}{y(t)}\big].
\end{align}
It can be readily verified from the definition that $D(0)= 0$ and $D(s)$ is increasing in $ [0, \frac{1}{y(t)}]$. Considering the second restriction in (\ref{y0}) and $\beta \in (0,1-\frac{1}{n})$, together with (\ref{thetat}), we deduce that
\begin{align}\label{frac1y}
D\left(\frac{1}{y(t)}\right)=\frac{1}{2}\mathrm{e}^{-\theta t} l y^{1 - \beta-\frac{1}{n}}(t)
\geq \frac{l{y_{\star}}^{1 - \beta-\frac{1}{n}}}{2\mathrm{e}}>1,
\quad  t \in (0,T) \cap \big(0, \frac{1}{\theta}\big).
\end{align}
Using the continuity of $D(s)$, we infer that there exists $s_0(t) \in \big(0, \frac{1}{y(t)}\big)$ such that, 
\begin{align}\label{Din}
0\leq D(s) \leq 1, \quad \text{ for all }t \in (0,T) \cap \big(0, \frac{1}{\theta}\big)\text{ and } s \in [0,s_0(t)]
\end{align} and 
\begin{align}\label{Dout}
D(s) \geq 1, \quad \text{ for all }t \in (0,T) \cap \big(0, \frac{1}{\theta}\big)\text{ and } s \in \big(s_0(t), \frac{1}{y(t)}\big).
\end{align}

\textbf{Case 1}. $s\in (0,s_0(t)]$ and $0<p<1$. By (\ref{Din}), we have
\begin{align*}
\frac{\underline{W}}{2}\big(1+s^{\frac{2}{n}-2}\frac{ \underline{W}^2}{4}\big)^{-\frac{p}{2}}=\frac{\underline{W}}{2}\big(1+D^2(s)\big)^{-\frac{p}{2}}
\geq 2^{-\frac{p}{2}-1}\underline{W}, 
\end{align*}
for all $t \in (0,T) \cap \left(0, \frac{1}{\theta}\right)$ and $s\in [0,s_0(t)]$. Thus, using the first condition in (\ref{y1}) and $2-\beta>1+\frac{1}{n}>1+\delta$ by $\beta<1-\frac{1}{n}$ and $\delta <\frac{1}{n}$, along with $y(t)\geq 1$, it follows from (\ref{eq3_10}) that
\begin{align}\label{in-1}
\mathcal{P}[\underline{U}, \underline{W}](s, t)
&\leq l  y^{-\alpha}(t)y^{\prime}(t) s
- 2^{-\frac{p}{2}-1}n \mathrm{e}^{-1} l y^{1 - \alpha}(t) \underline{W} \nonumber \\
&= l  y^{-\alpha}(t)y^{\prime}(t) s
- 2^{-\frac{p}{2}-1}n \mathrm{e}^{-1} l y^{1 - \alpha}(t) e^{-\theta  t}l y^{1 - \beta}(t) s \nonumber \\
& \leq l y^{-\alpha}(t)y^{\prime}(t) s
-2^{-\frac{p}{2}-1}n \mathrm{e}^{-2} l^2 y^{2 - \alpha-\beta}(t)s  \nonumber \\
& = l y^{-\alpha}(t) s\big(y^{\prime}(t)-2^{-\frac{p}{2}-1}n \mathrm{e}^{-2} l y^{2 -\beta}(t)\big) \nonumber \\
& \leq l y^{-\alpha}(t) s\big(y^{\prime}(t)-2^{-\frac{p}{2}-1}n \mathrm{e}^{-2} l y^{1+\delta}(t)\big) \nonumber \\
& \leq 0,
\end{align}
for all $t \in (0,T) \cap \left(0, \frac{1}{\theta}\right)$ and $s\in [0,s_0(t)]$.

\textbf{Case 2}. $s \in \big(s_0(t), \frac{1}{y(t)}\big)$ and $0<p<1$. By (\ref{Dout}), we have
\begin{align*}
\frac{\underline{W}}{2}\big(1+s^{\frac{2}{n}-2}\frac{ \underline{W}^2}{4}\big)^{-\frac{p}{2}}
=\frac{\underline{W}}{2}\big(1+D^2(s)\big)^{-\frac{p}{2}}
\geq 2^{-\frac{p}{2}-1}\frac{\underline{W}}{D^p(s)}
=2^{\frac{p}{2}-1}s^{(1-\frac{1}{n})p} \underline{W}^{1-p}, 
\end{align*}
Relying on (\ref{pq-2}) and the second condition in (\ref{y1}), together with $y(t)\geq 1$, we deduce that
\begin{align*}
\mathcal{P}[\underline{U}, \underline{W}](s, t)
& \leq l  y^{-\alpha}(t)y^{\prime}(t) s
- 2^{\frac{p}{2}-1} n \mathrm{e}^{-1} l y^{1 - \alpha}(t)  s^{(1-\frac{1}{n})p} (\mathrm{e}^{-\theta t} l y^{1 - \beta}(t)s)^{1-p} \nonumber \\
& \leq l  y^{-\alpha}(t)y^{\prime}(t) s
- 2^{\frac{p}{2}-1} n \mathrm{e}^{p-2} l^{2-p} s^{1-\frac{p}{n}}  y^{1 - \alpha+(1 - \beta)(1-p)}(t)   \nonumber \\
& = l  y^{-\alpha}(t) s
\left(y^{\prime}(t)- 2^{\frac{p}{2}-1} n \mathrm{e}^{p-2} l^{1-p} s^{-\frac{p}{n}} y^{1 +(1 - \beta)(1-p)}(t)   \right) \nonumber \\
& \leq l  y^{-\alpha}(t)  s 
\left(y^{\prime}(t)- 2^{\frac{p}{2}-1} n \mathrm{e}^{p-2} l^{1-p} R^{-p} y^{1 +\delta}(t)  \right) \nonumber \\
& \leq 0,
\end{align*}
for all $t \in (0,T) \cap \left(0, \frac{1}{\theta}\right)$ and $s \in \big(s_0(t), \frac{1}{y(t)}\big)$.

\textbf{Case 3}. $s \in \big(0, \frac{1}{y(t)}\big)$ and $p\leq 0$. Thus, we have
\begin{align*}
\frac{\underline{W}}{2}\big(1+s^{\frac{2}{n}-2}\frac{ \underline{W}^2}{4}\big)^{-\frac{p}{2}}=\frac{\underline{W}}{2}\big(1+D^2(s)\big)^{-\frac{p}{2}}
\geq 2^{-1}\underline{W}.
\end{align*}
Similar to (\ref{in-1}) in Case 1, along with (\ref{y1}), we obtain
\begin{align*}
\mathcal{P}[\underline{U}, \underline{W}](s, t)
&\leq l  y^{-\alpha}(t)y^{\prime}(t) s
- 2^{-1}n \mathrm{e}^{-1} l y^{1 - \alpha}(t) \underline{W} \nonumber \\
& \leq l y^{-\alpha}(t) s\big(y^{\prime}(t)-2^{-1}n \mathrm{e}^{-2} l y^{1+\delta}(t)\big) \nonumber \\
& \leq 0,
\end{align*}
for all $t \in (0,T) \cap \left(0, \frac{1}{\theta}\right)$ and $s \in \big(0, \frac{1}{y(t)}\big)$.

Owing to the symmetry, we apply (\ref{y0-1}), (\ref{y1}), the second restriction in (\ref{pq-2}) to obtain $\mathcal{Q}[\underline{U}, \underline{W}](s, t) \leq 0$ for all $t \in (0,T) \cap \left(0, \frac{1}{\theta}\right)$ and $s \in \big(0, \frac{1}{y(t)}\big)$. 
\end{proof}
The following lemma demonstrates that $\mathcal{P}[\underline{U}, \underline{W}](s, t) \leq 0$ and $\mathcal{Q}[\underline{U}, \underline{W}](s, t) \leq 0$ in the intermediate region $\big(\frac{1}{y(t)},s_{\star}\big]$, provided that $s_{\star}$ is sufficiently small.

\begin{lem}\label{lem3.4}
Let $\Omega=B_R(0) \subset \mathbb{R}^n$ with $n \geq 3$, and let $\alpha$, $\beta$, $\delta $ be as in Lemma~\ref{alphabeta-lem}. Assume that \eqref{f} and \eqref{g} hold with $p,q <1$ satisfying $(\ref{pq})$. For fixed $y_{\star}$ taken from Lemma~\ref{lem3.3}, there exists a sufficiently small constant $s_{\star}=s_{\star}(\alpha,\beta,\mu^{\star},l,\delta) \in (0,R^n)$ such that if $T>0$ and a nondecreasing function $y(t) \in C^1([0, T))$ satisfies 
\begin{align}\label{y1-3}
\left\{\begin{array}{l}
y^{\prime}(t)\leq 
y^{1+\delta}(t), \ t\in (0,T), \\
y(0) > \max\{\frac{1}{s_{\star}},(1+\frac{\beta}{n-1-n\beta})\frac{1}{R^n}, y_{\star}\},
\end{array}\right.
\end{align}
then, for arbitrary $\theta>0$, the functions $\underline{U}$ and $\underline{W}$ from $(\ref{eq3_7})$ satisfy
\begin{align}\label{mid}
\mathcal{P}[\underline{U}, \underline{W}](s, t) \leq 0, \quad  \mathcal{Q}[\underline{U}, \underline{W}](s, t) \leq 0,
\end{align}
for all $t \in (0,T) \cap \left(0, \frac{1}{\theta}\right)$ and $s \in \big(\frac{1}{y(t)},s_{\star}\big]$.
\end{lem}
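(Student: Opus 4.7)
The plan is to estimate $\mathcal{P}[\underline{U},\underline{W}]$ directly from the explicit formulas \eqref{phis}--\eqref{psit}, viewing the intermediate region as a small-$s$ regime in which the constraint $sy(t)>1$ controls every factor. Since $y(t)\ge y_{\star}>1/s_{\star}\ge 1/s$ there, we have
\[
\alpha s < s - \frac{1-\alpha}{y(t)} < s ,\qquad \beta s < s - \frac{1-\beta}{y(t)} < s,
\]
so each occurrence of $\bigl(s-(1-\alpha)/y(t)\bigr)^{\gamma}$ or $\bigl(s-(1-\beta)/y(t)\bigr)^{\gamma}$ can be sandwiched between two constant multiples of $s^{\gamma}$. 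This converts $\mathcal{P}[\underline{U},\underline{W}]$ into a comparison of pure powers of $s$ multiplied by bounded constants, analogous to the inner-region analysis in Lemma~\ref{lem3.3}.

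For the positive (``bad'') terms I would discard $-\theta\mathrm{e}^{-\theta t}\Phi$ in $\underline{U}_t$ and combine $\bigl(s-(1-\alpha)/y(t)\bigr)^{\alpha-1}\le(\alpha s)^{\alpha-1}$ with $y'/y^{2}\le y^{\delta -1}$ from \eqref{y1-3} to get $\underline{U}_t\le l(1-\alpha)\mathrm{e}^{-\theta t}s^{\alpha-1}y^{\delta -1}$; similarly $-n^{2}s^{2-2/n}\underline{U}_{ss}\le n^{2}l(1-\alpha)\alpha^{-1}\mathrm{e}^{-\theta t}s^{\alpha-2/n}$. For the negative (``good'') drift, $\underline{U}_{s}\ge l\alpha^{1-\alpha}\mathrm{e}^{-\theta t}s^{\alpha-1}$ and $\underline{W}\ge l\mathrm{e}^{-\theta t}s^{\beta}$. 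Using $\theta t<1$ to pin $\mathrm{e}^{-\theta t}\ge\mathrm{e}^{-1}$, I would choose $s_{\star}$ small (depending only on $\mu^{\star},l,\beta$, and symmetrically on $\alpha$) so that $\underline{W}-\mu^{\star}s/n\ge\underline{W}/2$ and $X:=s^{2/n-2}(\underline{W}-\mu^{\star}s/n)^{2}\ge 1$; the latter hinges on $\beta<1-1/n$, which makes the exponent $2/n-2+2\beta$ strictly negative, forcing $X\to\infty$ as $s\searrow 0$.

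The flux limiter is then treated by a case split on the sign of $p$. For $p>0$, $(1+X)^{-p/2}\ge 2^{-p/2}X^{-p/2}$ combined with the upper bound $X\le l^{2}\mathrm{e}^{-2\theta t}\beta^{-2\beta}s^{2/n-2+2\beta}$ (from $\underline{W}\le l\mathrm{e}^{-\theta t}\beta^{-\beta}s^{\beta}$) yields $f\ge c\,s^{p(1-1/n-\beta)}$; for $p<0$, $f\ge X^{-p/2}$ together with the lower bound on $X$ gives the same algebraic power; for $p=0$, $f\equiv 1$. In every case the drift is bounded below by a positive constant times $l^{2-p}\mathrm{e}^{-(2-p)\theta t}s^{\alpha-1+\beta(1-p)+p(1-1/n)}$. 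Taking ratios and using $y(t)>1/s$ in the time ratio, the exponents of $s$ become
\[
\beta(1-p)+p(1-1/n)-1+2/n \quad\text{(drift/diffusion)}, \qquad \beta(1-p)+p(1-1/n)-1+\delta \quad\text{(drift/time)}.
\]
By \eqref{pq-3} the first is strictly negative; using $\beta<1-1/n$ and $1-p>0$, the second is at most $-1/n+\delta$, which is strictly negative by Lemma~\ref{alphabeta-lem}. Hence after shrinking $s_{\star}$ once more (independently of $\theta$), drift $\ge$ time $+$ diffusion, so $\mathcal{P}[\underline{U},\underline{W}]\le 0$. The bound $\mathcal{Q}[\underline{U},\underline{W}]\le 0$ follows by the same argument with $(\alpha,p)$ and $(\beta,q)$ interchanged and the symmetric halves of \eqref{pq-2}--\eqref{pq-3}.

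The main technical obstacle will be keeping the smallness of $s_{\star}$ uniform in $t$ and $\theta$: this is exactly what forces the use of $\theta t<1$ to control $\mathrm{e}^{-\theta t}$ and the strict (not merely non-strict) inequalities in \eqref{pq-2}--\eqref{pq-3}, which produce uniformly negative exponents of $s$ in both ratios. A minor point worth noting is that $\Phi$ and $\Psi$ are only $C^{1}$ at $s=1/y(t)$, so \eqref{mid} is established almost everywhere in $s$, precisely as demanded by Lemma~\ref{lem2.2}.
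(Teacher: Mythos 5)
Your proposal is correct and follows essentially the same route as the paper's proof: sandwiching $s-(1-\alpha)/y$ and $s-(1-\beta)/y$ between constant multiples of $s$, exploiting $\theta t<1$ to freeze $\mathrm{e}^{-\theta t}$, establishing $\underline{W}-\mu^{\star}s/n\ge\underline{W}/2$ and the $\ge 1$ property of the argument of the flux limiter via smallness of $s_{\star}$, and reducing to the exponent inequalities encoded in \eqref{pq-2}--\eqref{pq-3}. The only cosmetic difference is that the paper phrases the flux-limiter bound through the function $h$ and proves the $\ge 1$ bound using the monotonicity of $D(s)$ together with its values at the endpoints $1/y(t)$ and $s_{\star}$, whereas you derive the same lower bound on $X:=s^{2/n-2}(\underline{W}-\mu^{\star}s/n)^2$ directly from the sandwich $\beta s<s-(1-\beta)/y(t)$ and the negativity of $2/n-2+2\beta$; both yield the same smallness condition on $s_{\star}$.
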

\begin{proof}
The interval $ (\frac{1}{y(t)},s_{\star}]$ is non-empty, owing to the fact that $y(t) \geq y(0) > \frac{1}{s_{\star}} $. Given the choices of $\alpha$, $\beta$ and $\delta$ in Lemma~\ref{alphabeta-lem}, 
we can choose $s_{\star} \in (0,R^n)$ to be sufficiently small so that 
\begin{align}\label{sstar}
s_{\star}^{1-\beta} < \frac{ln\beta^{1-\beta}}{2\mathrm{e}\mu^{\star}},\quad
s_{\star}^{1-\beta-\frac{1}{n}} < \frac{l}{2\mathrm{e}}
\end{align}
and
\begin{align}\label{sstar-1}
\frac{2\alpha^{\delta-\alpha} l}{c_1}<s_{\star}^{-\big((\frac{1}{n}+\beta-1)p+1-\beta-\delta\big)},\quad
\frac{2n^2\alpha^{\frac{2}{n}-\alpha-1}l}{c_1}<s_{\star}^{-\big((\frac{1}{n}+\beta-1)p+1-\beta-\frac{2}{n}\big)} ,
\end{align}
\begin{align}\label{sstar-2}
s_{\star}^{1-\alpha} < \frac{ln\alpha^{1-\alpha}}{2\mathrm{e}\mu^{\star}},\quad
s_{\star}^{1-\alpha-\frac{1}{n}} < \frac{l}{2\mathrm{e}},
\end{align}
as well as
\begin{align}\label{sstar-3}
\frac{2\beta^{\delta-\beta} l}{c_2}<s_{\star}^{-\big((\frac{1}{n}+\alpha-1)q+1-\alpha-\delta\big)},\quad
\frac{2n^2\beta^{\frac{2}{n}-\beta-1}l}{c_2}<s_{\star}^{-\big((\frac{1}{n}+\alpha-1)q+1-\alpha-\frac{2}{n}\big)},
\end{align}
where 
\begin{align}\label{c2}
c_1=\min\{2^{p-1}\alpha^{-(1-\frac{1}{n})p},2^{\frac{p}{2}-1}\}c_{\star}^{\beta(1-p)}n \mathrm{e}^{p-2} \alpha^{1-\alpha}l^{2-p}\beta^{-\beta(1-p)} 
\end{align}
and $$c_2=\min\{2^{q-1}\beta^{-(1-\frac{1}{n})q},2^{\frac{q}{2}-1}\}{c_{\star \star}}^{\alpha(1-q)}n \mathrm{e}^{q-2} \beta^{1-\beta}l^{2-q}\alpha^{-\alpha(1-q)} $$ 
with $c_{\star}=\min\{\frac{\beta}{\alpha},1\}$ and $c_{\star \star}=\min\{\frac{\alpha}{\beta},1\}$. According to the definitions of $\underline{U}$, $\underline{W}$, $\mathcal{P}$ and $h$ defined in (\ref{h}), along with $\theta t<1$ by (\ref{thetat}), we have
\begin{align}\label{p-1} 
\mathcal{P}[\underline{U}, \underline{W}](s, t)
=& \underline{U}_t - n^2 s^{2 - \frac{2}{n}} \underline{U}_{ss} - n \underline{U}_s \cdot \Big(\underline{W} - \frac{\mu^{\star} s}{n}\Big) f\left(s^{\frac{2}{n}-2}\Big(\underline{W}-\frac{\mu^{\star} s}{n}\Big)^2\right)
\nonumber \\
=&-\theta \mathrm{e}^{-\theta t}\alpha^{-\alpha}l\Big(s-\frac{1-\alpha}{y(t)}\Big)^\alpha
+\mathrm{e}^{-\theta t}\alpha^{1-\alpha} l(1-\alpha)\Big(s-\frac{1-\alpha}{y(t)}\Big)^{\alpha-1}\frac{y^{\prime}(t)}{y^{2}(t)} \nonumber \\
&+\mathrm{e}^{-\theta t}n^2s^{2-\frac{2}{n}}\alpha^{1-\alpha} l (1-\alpha)\Big(s-\frac{1-\alpha}{y(t)}\Big)^{\alpha-2}\nonumber \\
& -n\mathrm{e}^{-\theta t}\alpha^{1-\alpha}l\Big(s-\frac{1-\alpha}{y(t)}\Big)^{\alpha-1} s^{1-\frac{1}{n}}
h\left(s^{\frac{1}{n}-1}\Big(\underline{W}-\frac{\mu^{\star} s}{n}\Big)\right)\nonumber \\
\leq& \alpha^{1-\alpha} l\Big(s-\frac{1-\alpha}{y(t)}\Big)^{\alpha-1}\cdot y^{\delta-1}(t)
+n^2s^{2-\frac{2}{n}}\alpha^{1-\alpha}l \Big(s-\frac{1-\alpha}{y(t)}\Big)^{\alpha-2} \nonumber \\
& -n\mathrm{e}^{-\theta t}\alpha^{1-\alpha}l\Big(s-\frac{1-\alpha}{y(t)}\Big)^{\alpha-1}
 s^{1-\frac{1}{n}}
h\left(s^{\frac{1}{n}-1}\Big(\underline{W}-\frac{\mu^{\star} s}{n}\Big)\right),
\end{align}
for all $t \in (0,T) \cap \big(0, \frac{1}{\theta}\big)$ and $s \in \big(\frac{1}{y(t)},s_{\star}\big]$. 
Due to $\delta \in (0,\frac{1}{n})$, for all $s>\frac{1}{y(t)}$, we obtain
\begin{align}\label{s1y}
y^{\delta-1}(t)<{\alpha}^{\delta-1}\Big(s-\frac{1-\alpha}{y(t)}\Big)^{1-\delta},\
\alpha s<s-\frac{1-\alpha}{y(t)}, \
\beta s<s-\frac{1-\beta}{y(t)}.
\end{align}
Employing the first two inequalities in (\ref{s1y}), we estimate the first two terms on the right-hand side of (\ref{p-1}), and thus derive that
\begin{align}\label{p-2}
 \mathcal{P}[\underline{U}, \underline{W}](s, t) 
\leq & \alpha^{\delta-\alpha} l\Big(s-\frac{1-\alpha}{y(t)}\Big)^{\alpha-\delta}
+n^2\alpha^{\frac{2}{n}-\alpha-1}l \Big(s-\frac{1-\alpha}{y(t)}\Big)^{\alpha-\frac{2}{n}}\nonumber\\
& -n\mathrm{e}^{-\theta t}\alpha^{1-\alpha}l\Big(s-\frac{1-\alpha}{y(t)}\Big)^{\alpha-1}
 s^{1-\frac{1}{n}}
h\left(s^{\frac{1}{n}-1}\Big(\underline{W}-\frac{\mu^{\star} s}{n}\Big)\right).
\end{align}

We estimate the last term on the right-hand side of (\ref{p-2}) and define
\begin{align*}
I:=n\mathrm{e}^{-\theta t}\alpha^{1-\alpha}l\Big(s-\frac{1-\alpha}{y(t)}\Big)^{\alpha-1}
 s^{1-\frac{1}{n}}
h\left(s^{\frac{1}{n}-1}\Big(\underline{W}-\frac{\mu^{\star} s}{n}\Big)\right) .
\end{align*}
The combination of the third inequality in (\ref{s1y}) and the first restriction in (\ref{sstar}), along with (\ref{thetat}), allows us to conclude that
\begin{align}\label{W2}
\frac{\underline{W}}{2}-\frac{\mu^{\star}s}{n}
=& \frac{1}{2} \mathrm{e}^{-\theta t}\beta^{-\beta}l  \left(s - \frac{1 - \beta}{y(t)}\right)^\beta-\frac{\mu^{\star}s}{n} \nonumber \\
\geq &\frac{l \left(s - \frac{1 - \beta}{y(t)}\right)^\beta}{2\mathrm{e}\beta^{\beta} }-\frac{\mu^{\star}\left(s - \frac{1 - \beta}{y(t)}\right)}{n\beta} \nonumber \\
=&\frac{\mu^{\star}}{n\beta}\left(s - \frac{1 - \beta}{y(t)}\right)^\beta
\left(\frac{ln\beta^{1-\beta}}{2\mathrm{e}\mu^{\star}}-\left(s - \frac{1 - \beta}{y(t)}\right)^{1-\beta}\right) \nonumber \\
\geq & \frac{\mu^{\star}}{n\beta}\left(s - \frac{1 - \beta}{y(t)}\right)^\beta
\left(\frac{ln\beta^{1-\beta}}{2\mathrm{e}\mu^{\star}}-s_{\star}^{1-\beta}\right) \nonumber \\
\geq & 0.
\end{align}
For given $t \in (0,T) \cap \left(0, \frac{1}{\theta}\right)$, we define
\begin{align*}
D(s):= s^{\frac{1}{n}-1}\frac{ \underline{W}(s,t)}{2}=\frac{1}{2}s^{\frac{1}{n}-1}\mathrm{e}^{-\theta t}l \beta^{-\beta} \cdot \left(s - \frac{1 - \beta}{y(t)}\right)^\beta, \quad  s \in \big(\frac{1}{y(t)},s_{\star}\big].
\end{align*}
We apply the third inequality in (\ref{s1y}) and the second restriction in (\ref{sstar}) to deduce that
\begin{align}\label{Dsstar}
D(s_{\star})
= \frac{1}{2}s_{\star}^{\frac{1}{n}-1} \mathrm{e}^{-\theta t} l \beta^{-\beta} \cdot \left(s_{\star} - \frac{1 - \beta}{y(t)}\right)^\beta
\geq      \frac{s_{\star}^{\frac{1}{n}-1}l \beta^{-\beta} (\beta s_{\star})^{\beta}}{ 2\mathrm{e}}
> 1.
\end{align}
Using $y(t)\geq y(0)>(1+\frac{\beta}{n-1-n\beta})\frac{1}{R^n}$, we infer that $ \frac{(1-\beta)(n-1)}{(n-1-n\beta)y(t)}<R^n$. Due to $0<\beta<1-\frac{1}{n}$, we know that $D(s)$ is increasing on $(\frac{1}{y(t)}, \frac{(1-\beta)(n-1)}{(n-1-n\beta)y(t)})$, and decreasing on $( \frac{(1-\beta)(n-1)}{(n-1-n\beta)y(t)},R^n)$. Combining the monotonicity of $D(s)$ with (\ref{Dsstar}) and (\ref{frac1y}) by $y(0)>y_{\star}$, we infer that 
\begin{align*}
D(s) \geq 1, \quad \text{ for all } t \in (0,T) \cap \big(0, \frac{1}{\theta}\big) \text{ and } s \in \big(\frac{1}{y(t)},s_{\star}\big].
\end{align*}
Therefore, according to (\ref{W2}) and the monotonicity of $h(x)$ defined in (\ref{h}), for any $p<1$, we have
\begin{align*}
h\Big(s^{\frac{1}{n}-1}\big(\underline{W}-\frac{\mu^{\star} s}{n}\big)\Big) 
\geq& h\Big(s^{\frac{1}{n}-1}\frac{\underline{W}}{2}\Big) =s^{\frac{1}{n}-1}\frac{\underline{W}}{2}\Big(1+\big(s^{\frac{1}{n}-1}\frac{\underline{W}}{2}\big)^2\Big)^{-\frac{p}{2}}\notag \\
=&D(s)\big(1+D^2(s)\big)^{-\frac{p}{2}}
\geq \min\{1,2^{-\frac{p}{2}}\} D^{1-p}
= \min\{1,2^{-\frac{p}{2}}\}\Big(s^{\frac{1}{n}-1}\frac{\underline{W}}{2}\Big)^{1-p}.
\end{align*}
Thus, by the definition of $I$, we have
\begin{align*}
I \geq & \min\{2^{p-1},2^{\frac{p}{2}-1}\}n\mathrm{e}^{-1}\alpha^{1-\alpha}l\Big(s-\frac{1-\alpha}{y(t)}\Big)^{\alpha-1}
 s^{(1-\frac{1}{n})p}\underline{W}^{1-p} \nonumber \\
= &\min\{2^{p-1},2^{\frac{p}{2}-1}\}n\mathrm{e}^{-1}\alpha^{1-\alpha}l\Big(s-\frac{1-\alpha}{y(t)}\Big)^{\alpha-1}
 s^{(1-\frac{1}{n})p}\Big(\mathrm{e}^{-\theta t} l \beta^{-\beta}  \big(s - \frac{1 - \beta}{y(t)}\big)^\beta \Big)^{1-p}.
\end{align*}
When $p\leq 0$, using the second inequality in (\ref{s1y}), we know that
\begin{align}\label{p<0-1}
s^{(1-\frac{1}{n})p}\geq\alpha^{-(1-\frac{1}{n})p}\Big(s-\frac{1-\alpha}{y(t)}\Big)^{(1-\frac{1}{n})p}.
\end{align}
For any $p<1$, thanks to $s - \frac{1 - \beta}{y(t)}>c_{\star}\big(s - \frac{1 - \alpha}{y(t)}\big)$ with $c_{\star}=\min\{\frac{\beta}{\alpha},1\}$, together with (\ref{p<0-1}), we obtain that
\begin{align*}
I \geq & \min\{2^{p-1},2^{\frac{p}{2}-1}\} c_{\star}^{\beta(1-p)}n \mathrm{e}^{p-2} \alpha^{1-\alpha}l^{2-p}\beta^{-\beta(1-p)} \Big(s-\frac{1-\alpha}{y(t)}\Big)^{\alpha-1+\beta(1-p)}  s^{(1-\frac{1}{n})p} \nonumber \\
\geq& \min\{2^{p-1}\alpha^{-(1-\frac{1}{n})p},2^{\frac{p}{2}-1}\}c_{\star}^{\beta(1-p)}n \mathrm{e}^{p-2} \alpha^{1-\alpha}l^{2-p}\beta^{-\beta(1-p)} \Big(s-\frac{1-\alpha}{y(t)}\Big)^{\alpha-1+\beta(1-p)+(1-\frac{1}{n})p} \nonumber \\
=&c_1\Big(s-\frac{1-\alpha}{y(t)}\Big)^{(1-\frac{1}{n}-\beta)p+\alpha+\beta-1}
\end{align*}
with $c_1 $ defined in (\ref{c2}). Thus, inserting this into (\ref{p-2}), and noticing that (\ref{sstar-1}), we show that
\begin{align*}
\mathcal{P}[\underline{U}, \underline{W}](s, t)
\leq & \alpha^{\delta-\alpha} l\Big(s-\frac{1-\alpha}{y(t)}\Big)^{\alpha-\delta}
+n^2\alpha^{\frac{2}{n}-\alpha-1}l \Big(s-\frac{1-\alpha}{y(t)}\Big)^{\alpha-\frac{2}{n}} \nonumber \\
&-c_1 \Big(s-\frac{1-\alpha}{y(t)}\Big)^{(1-\frac{1}{n}-\beta)p+\alpha+\beta-1}  \nonumber \\
= & \frac{c_1}{2}\Big(s-\frac{1-\alpha}{y(t)}\Big)^{\alpha-\delta} \left(\frac{2\alpha^{\delta-\alpha} l}{c_1}-\Big(s-\frac{1-\alpha}{y(t)}\Big)^{-\big((\frac{1}{n}+\beta-1)p+1-\beta-\delta\big)} \right) \nonumber \\
&+\frac{c_1}{2} \Big(s-\frac{1-\alpha}{y(t)}\Big)^{\alpha-\frac{2}{n}} 
\left( \frac{2n^2\alpha^{\frac{2}{n}-\alpha-1}l}{c_1}-\Big(s-\frac{1-\alpha}{y(t)}\Big)^{-\big((\frac{1}{n}+\beta-1)p+1-\beta-\frac{2}{n}\big)} \right) \nonumber \\
\leq & \frac{c_1}{2}\Big(s-\frac{1-\alpha}{y(t)}\Big)^{\alpha-\delta} \left(\frac{2\alpha^{\delta-\alpha} l}{c_1}-s_{\star}^{-\big((\frac{1}{n}+\beta-1)p+1-\beta-\delta\big)}\right) \nonumber \\
&+\frac{c_1}{2} \Big(s-\frac{1-\alpha}{y(t)}\Big)^{\alpha-\frac{2}{n}} 
\left( \frac{2n^2\alpha^{\frac{2}{n}-\alpha-1}l}{c_1}-s_{\star}^{-\big((\frac{1}{n}+\beta-1)p+1-\beta-\frac{2}{n}\big)} \right) \nonumber \\
\leq & 0, 
\end{align*}
for all $t \in (0,T) \cap \left(0, \frac{1}{\theta}\right)$ and $s \in \big(\frac{1}{y(t)},s_{\star}\big]$. A similar argument, based on the symmetry, the second condition in (\ref{pq-3}), and the smallness assumptions (\ref{sstar-2}) and (\ref{sstar-3}) on $s_{\star}$, shows that $\mathcal{Q}[\underline{U}, \underline{W}](s, t) \leq 0$ for all $t \in (0,T) \cap \left(0, \frac{1}{\theta}\right)$ and $s \in \big(\frac{1}{y(t)},s_{\star}\big]$. We complete our proof.
\end{proof}
The following lemma shows that, for sufficiently large $\theta$, $\mathcal{P}[\underline{U}, \underline{W}](s, t) \leq0$ and $\mathcal{Q}[\underline{U}, \underline{W}](s, t)\leq 0$ hold in the outer region $\left(s_{\star}, R^n\right)$.
\begin{lem}\label{lem3.5}
Let $\Omega=B_R(0) \subset \mathbb{R}^n$ with $n \geq 3$, and let $\alpha$, $\beta$, $\delta $ be as in Lemma~\ref{alphabeta-lem}. Assume that \eqref{f} and \eqref{g} hold with $p,q <1$ satisfying $(\ref{pq})$. For fixed $s_{\star}$ taken from Lemma~\ref{lem3.4}, there exists a sufficiently large constant $\theta^{\star}=\theta^{\star}(\alpha,\beta,\mu^{\star},l,\delta)$ such that if $T>0$ and a nondecreasing function $y(t) \in C^1([0, T))$ satisfies 
\begin{align}\label{y1-2}
\left\{\begin{array}{l}
y^{\prime}(t)\leq 
y^{1+\delta}(t), \ t\in (0,T), \\
y(0) > \frac{1}{s_{\star}},
\end{array}\right.
\end{align}
then, whenever $\theta>\theta^{\star}$, the functions $\underline{U}$ and $\underline{W}$ from $(\ref{eq3_7})$ satisfy
\begin{align*}
\mathcal{P}[\underline{U}, \underline{W}](s, t) \leq 0, \quad \mathcal{Q}[\underline{U}, \underline{W}](s, t) \leq 0,
\end{align*}
for all $t \in (0,T) \cap \left(0, \frac{1}{\theta}\right)$ and $s \in \left(s_{\star}, R^n\right)$.
\end{lem}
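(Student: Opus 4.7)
The plan is to use the fact that the outer region $(s_\star, R^n)$ is uniformly bounded away from the singular point $s=1/y(t)$, so every term appearing in $\mathcal{P}[\underline{U},\underline{W}]$ and $\mathcal{Q}[\underline{U},\underline{W}]$ other than the damping term coming from $\partial_t \mathrm{e}^{-\theta t}$ is bounded independently of $\theta$. Since $\Phi$ and $\Psi$ themselves are bounded below by positive constants there, choosing $\theta$ sufficiently large will force the damping term $-\theta\mathrm{e}^{-\theta t}\Phi$ (respectively $-\theta\mathrm{e}^{-\theta t}\Psi$) to dominate and yield the desired sign.

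More concretely, I would first use $y(t)\geq y(0)>1/s_\star$ to deduce $1/y(t)<s_\star\leq s$ on the outer region, so that the second branches of $\Phi$ and $\Psi$ apply and
$$\alpha s_\star \leq s-\frac{1-\alpha}{y(t)} \leq R^n,\qquad \beta s_\star \leq s-\frac{1-\beta}{y(t)} \leq R^n.$$
Reading off \eqref{phi}--\eqref{psit} and combining with $y'(t)\leq y^{1+\delta}(t)$ together with the fact that $y^{\delta-1}(t)\leq y^{\delta-1}(0)$ is bounded (since $\delta<1$ and $y$ is nondecreasing), it follows that $\Phi,\Phi_s,\Phi_{ss},\Phi_t$ and their $\Psi$-analogues are all bounded on $(s_\star,R^n)\times[0,T)$ by a constant $C_1$ depending only on $\alpha,\beta,l,s_\star,R,\delta$, while $\Phi\geq l s_\star^\alpha$ and $\Psi\geq l s_\star^\beta$ on the same set.

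Next, I would expand
$$\mathcal{P}[\underline{U},\underline{W}]=\mathrm{e}^{-\theta t}\left[-\theta\Phi+\Phi_t-n^2 s^{2-\frac{2}{n}}\Phi_{ss}-n\Phi_s\left(\underline{W}-\frac{\mu^\star s}{n}\right)f\left(s^{\frac{2}{n}-2}\left(\underline{W}-\frac{\mu^\star s}{n}\right)^2\right)\right]$$
and bound each of the last three terms in the bracket by a constant. The only subtle point is to check that $f(\cdot)$ stays bounded when $p\leq 0$; this holds because $\underline{W}=\mathrm{e}^{-\theta t}\Psi$ is bounded and $s\geq s_\star>0$, so the argument of $f$ lies in a fixed compact subset of $[0,\infty)$. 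Thus the bracket is at most $-\theta l s_\star^\alpha + C_2$ for some $C_2=C_2(\alpha,\beta,\mu^\star,l,\delta,R,p)$ independent of $\theta$, and by symmetry the bracket for $\mathcal{Q}$ is at most $-\theta l s_\star^\beta + C_3$ for an analogous constant $C_3=C_3(\alpha,\beta,\mu^\star,l,\delta,R,q)$. Defining $\theta^\star:=\max\{C_2/(l s_\star^\alpha),\ C_3/(l s_\star^\beta)\}$ and taking any $\theta>\theta^\star$ yields the conclusion.

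The main obstacle is merely the bookkeeping required to verify these uniform-in-$\theta$ bounds when $p$ or $q$ is negative, since in that regime $f$ and $g$ are increasing in their arguments and a priori could grow. The lower bound $s\geq s_\star$ together with the $t$-uniform $L^\infty$ bound on $\underline{U},\underline{W}$ confines the relevant arguments to a fixed compact set, and no structural ingredient beyond those already deployed in Lemmas~\ref{lem3.3} and~\ref{lem3.4} is needed.
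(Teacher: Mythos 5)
Your proposal is correct and follows essentially the same route as the paper's proof: in the outer region $s\in(s_\star,R^n)$ one has $\alpha s_\star\le s-\frac{1-\alpha}{y(t)}\le R^n$ (and the $\beta$-analogue), so $\Phi,\Phi_s,\Phi_{ss},\Phi_t$, the argument of $f$, and hence $f$ itself are all bounded by constants independent of $\theta$, while $\Phi\ge l s_\star^\alpha>0$, and the damping term $-\theta\Phi$ dominates once $\theta$ exceeds a threshold. The only cosmetic difference is that you factor out $\mathrm{e}^{-\theta t}$ entirely, whereas the paper keeps $\underline{U}_t$ as is and invokes $\mathrm{e}^{-\theta t}\ge\mathrm{e}^{-1}$ from the restriction $t<1/\theta$; both lead to the same threshold, with matching dependence on $(\alpha,\beta,\mu^\star,l,\delta)$ and $s_\star$.
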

\begin{proof}
We fix $\theta^{\star}$ large enough such that 
\begin{align}\label{theta-1}
\frac{l {s_{\star}}^\alpha}{\mathrm{e}} \theta^{\star}
\geq ls^{\alpha-\delta}_{\star} + \frac{n^2 l R^{2n - 2} {s_{\star}}^{\alpha - 2}}{\alpha}  + c^{\star}\mu^{\star}l s_{\star}^{\alpha-1} R^n
\end{align}
and
\begin{align}\label{theta-2}
\frac{l {s_{\star}}^\beta}{\mathrm{e}} \theta^{\star}
\geq ls^{\beta-\delta}_{\star} + \frac{n^2 l R^{2n - 2} {s_{\star}}^{\beta - 2}}{\beta}  + c^{\star \star}\mu^{\star}l s_{\star}^{\beta-1} R^n,
\end{align}
where $c^{\star}=\min\{1,\big(1+s_{\star}^{\frac{2}{n}-2}(l {\beta}^{-\beta}R^{n\beta})^2\big)^{-\frac{p}{2}}\}$ and $c^{\star \star}=\min\{1,\big(1+s_{\star}^{\frac{2}{n}-2}(l {\alpha}^{-\alpha}R^{n\alpha})^2\big)^{-\frac{q}{2}}\}$.
By $s_{\star}>\frac{1}{y(t)}$ and $\delta \in (0,\frac{1}{n})$, we deduce that 
\begin{align}\label{s7}
R^{n}>s-\frac{1-\alpha}{y(t)}>s_{\star}-\frac{1-\alpha}{y(t)}>\alpha s_{\star},
\end{align}
and
\begin{align}\label{s8}
R^{n}>s-\frac{1-\beta}{y(t)}>s_{\star}-\frac{1-\beta}{y(t)}>\beta s_{\star},
\end{align}
as well as
\begin{align}\label{s9}
y^{\delta-1}(t)<s^{1-\delta}_{\star}.
\end{align}
According to (\ref{s8}) and the definition of $\underline{W}$, we know that
\begin{align*}
s^{\frac{2}{n}-2}\left(\underline{W}-\frac{\mu^{\star} s}{n}\right)^2
\leq s_{\star}^{\frac{2}{n}-2}\underline{W}^2
\leq s_{\star}^{\frac{2}{n}-2}(l {\beta}^{-\beta}R^{n\beta})^2.
\end{align*}
Thus, we have
\begin{align*}
f\left(s^{\frac{2}{n}-2}\Big(\underline{W}-\frac{\mu^{\star} s}{n}\Big)^2\right)
=\left(1+s^{\frac{2}{n}-2}\Big(\underline{W}-\frac{\mu^{\star} s}{n}\Big)^2\right)^{-\frac{p}{2}}
\leq c^{\star},
\end{align*}
where $c^{\star}=\min\{1,\big(1+s_{\star}^{\frac{2}{n}-2}(l {\beta}^{-\beta}R^{n\beta})^2\big)^{-\frac{p}{2}}\}$.
Using (\ref{theta-1}), (\ref{s7}) and (\ref{s9}), we infer that
\begin{align*}
\begin{aligned}
\mathcal{P}[\underline{U}, \underline{W}](s, t)
=& \underline{U}_t - n^2 s^{2 - \frac{2}{n}} \underline{U}_{ss} - n \underline{U}_s \cdot \Big(\underline{W} - \frac{\mu^{\star} s}{n}\Big) f\left(s^{\frac{2}{n}-2}\Big(\underline{W}-\frac{\mu^{\star} s}{n}\Big)^2\right)
\nonumber \\
\leq & \underline{U}_t - n^2 s^{2 - \frac{2}{n}} \underline{U}_{ss}+ c^{\star} n \underline{U}_s \frac{\mu^{\star} s}{n} \nonumber \\
\leq & -\theta \mathrm{e}^{-\theta t} l \alpha^{-\alpha} \Big(s-\frac{1-\alpha}{y(t)}\Big) ^{\alpha} 
+ \mathrm{e}^{-\theta t} l \alpha^{1-\alpha} (1-\alpha) \Big(s-\frac{1-\alpha}{y(t)}\Big)^{\alpha-1}y^{\delta-1}(t) \\ \notag
&+ n^2 s^{2-\frac{2}{n}} \mathrm{e}^{-\theta t}l(1-\alpha) \alpha^{1-\alpha}  \Big(s-\frac{1-\alpha}{y(t)}\Big)^{\alpha-2}
+ c^{\star}\mu^{\star} \mathrm{e}^{-\theta t} l \alpha^{1 - \alpha} \cdot \left(s - \frac{1 - \alpha}{y(t)}\right)^{\alpha - 1} s\\ \notag
\leq & -\frac{l\theta^{\star} {s_{\star}}^\alpha}{\mathrm{e}}+  ls^{\alpha-\delta}_{\star} + \frac{n^2 l R^{2n - 2} {s_{\star}}^{\alpha - 2}}{\alpha}  + c^{\star}\mu^{\star}l s_{\star}^{\alpha-1} R^n\\ \notag
\leq & 0,
\end{aligned}
\end{align*}
for all $t \in (0,T) \cap \left(0, \frac{1}{\theta}\right)$ and $s \in \left(s_{\star}, R^n\right)$. Similarly, from (\ref{theta-2}), (\ref{s8}) and (\ref{s9}), we find that
\begin{align*}
\begin{aligned}
\mathcal{Q}[\underline{U}, \underline{W}](s, t) 
=& \underline{W}_t - n^2 s^{2 - \frac{2}{n}} \underline{W}_{ss} - n \underline{W}_s \cdot \Big(\underline{U} - \frac{\mu^{\star} s}{n}\Big) g\left(s^{\frac{2}{n}-2}\Big(\underline{U}-\frac{\mu^{\star} s}{n}\Big)^2\right)
\nonumber \\
\leq & -\frac{l\theta {s_{\star}}^\beta}{\mathrm{e}}+  ls^{\beta-\delta}_{\star} + \frac{n^2 l R^{2n - 2} {s_{\star}}^{\beta - 2}}{\beta}  + c^{\star \star}\mu^{\star}l s_{\star}^{\beta-1} R^n\\ \notag
\leq & 0, \text{ for all } t \in (0,T) \cap \big(0, \frac{1}{\theta}\big) \text{ and } s \in \left(s_{\star}, R^n\right),
\end{aligned}
\end{align*}
where $c^{\star \star}=\min\{1,\big(1+s_{\star}^{\frac{2}{n}-2}(l {\alpha}^{-\alpha}R^{n\alpha})^2\big)^{-\frac{q}{2}}\}$. We complete our proof.
\end{proof}

\begin{proof}
[\textbf{Proof of Theorem \ref{thm1_1}}.] 
Using (\ref{eq3_4}) and the definition of $\underline{U}$, along with $\alpha^{-\alpha}=\mathrm{e}^{-\alpha\mathrm{ln}\alpha} \leq \mathrm{e}^{\frac{1}{\mathrm{e}}}$, we have
	\begin{align}\label{uRn}
		\begin{aligned}
			\underline{U}(R^n, t) &= \mathrm{e}^{-\theta t} \alpha^{-\alpha} l \left( R^n - \frac{1 - \alpha}{y(t)} \right)^{\alpha} 
			\leq \alpha^{-\alpha} l R^{n\alpha} 
			= \alpha^{-\alpha} R^{n\alpha} \frac{\mu_{\star} R^n}{n \mathrm{e}^{\frac{1}{\mathrm{e}}}(R^n + 1)} \\
			&\leq \frac{\mu_{\star} R^n}{n} \cdot \frac{R^{n\alpha}}{R^n + 1} 
            \leq \frac{\mu_{\star} R^n}{n}
            \leq U(R^n, t).
		\end{aligned}
	\end{align}
In (\ref{eq1.8}), we take 
\begin{align*}
M_1(r) = \omega_n  \underline{U}(r^n, 0), \quad \quad M_2(r) = \omega_n \underline{W}(r^n, 0),
 \quad  r \in [0, R],
\end{align*}
where $\omega_n$ is the surface area of the unit sphere. Then, we deduce that
\begin{align}\label{u0}
\underline{U}(s, 0)=\frac{1}{\omega_n}M_1(s^{\frac{1}{n}})
\leq \frac{1}{\omega_n}\int_{B_{s^{\frac{1}{n}}}(0)} u_0  \mathrm{~d}x=U(s,0).
\end{align}
Similarly, we have
\begin{align}\label{wRnw0}
\underline{W}(R^n, t) \leq  W(R^n, t) \ \text{ and } \ \ \underline{W}(s, 0) \leq W(s,0).
\end{align}

Take $\alpha$, $\beta$ and $\delta$ as in Lemma~\ref{alphabeta-lem}, $s_{\star}$ as in Lemma~\ref{lem3.4} and $\theta^{\star}$ as in Lemma~\ref{lem3.5}. For given $\theta>\theta^{\star}$ and $y_{\star} $ from Lemma~\ref{lem3.3}, we define
\begin{align*}
\gamma=\min\Big\{&1,\min\{2^{-1},2^{-\frac{p}{2}-1}\} n \mathrm{e}^{-2} l,\ 
2^{\frac{p}{2}-1} n \mathrm{e}^{p-2} l^{1-p} R^{-p}, \\ 
&\min\{2^{-1},2^{-\frac{q}{2}-1}\}n \mathrm{e}^{-2} l,\ 
2^{\frac{q}{2}-1} n \mathrm{e}^{q-2} l^{1-q} R^{-q}   \Big\}
\end{align*}
and
\begin{align}\label{y0-4}
y_0>\max\big\{1,\frac{1}{s^{\star}},(1+\frac{\beta}{n-1-n\beta})\frac{1}{R^n},y_{\star},\big(\frac{\theta}{\gamma\delta}\big)^{\frac{1}{\delta}}\big\}.
\end{align}
Let $y(t)$ be the blow-up solution of the following ODE:
\begin{align}\label{y}
\left\{\begin{array}{l}
y^{\prime}(t)= \gamma
y^{1+\delta}(t), \ t\in (0,T), \\
y(0)=y_0,
\end{array}\right.
\end{align}
with 
\begin{align}\label{T}
T=\frac{1}{\gamma \delta} y_0^{-\delta}<\frac{1}{\theta}.
\end{align}
Then, $y^{\prime}(t) \geqslant 0$ and $y(t)\rightarrow +\infty$ as $t \nearrow T$. Our choice of $y(t)$ satisfying (\ref{y0-4})-(\ref{T}) meet the requirements in Lemmas~\ref{lem3.3}-\ref{lem3.5}. Recalling to Lemmas~\ref{lem3.3}-\ref{lem3.5} and (\ref{T}), we have
\begin{align*}
\mathcal{P}[\underline{U}, \underline{W}](s, t) \leq 0, \quad \mathcal{Q}[\underline{U}, \underline{W}](s, t) \leq 0, \quad
(s,t) \in(0,R^n) \setminus \left\{ \frac{1}{y(t)} \right\} \times (0,T).
\end{align*}
Combining this with (\ref{uRn}), (\ref{u0}) and (\ref{wRnw0}), along with $\underline{U}(0,t)=U(0,t)=\underline{W}(0,t)=W(0,t)=0$, we deduce that \begin{align*}
\underline{U}(s, t) \leq \overline{U}(s, t), \quad
\underline{W}(s, t) \leq \overline{W}(s, t),\quad
(s,t) \in(0,R^n) \setminus \left\{ \frac{1}{y(t)} \right\} \times (0,T).
\end{align*}
 Thanks to $U(0,t)=\underline{U}(0,t)=0$, we obtain
\begin{align}\label{us=0}
\frac{1}{n} \cdot u(0, t) = U_s(0, t) \geq \underline{U}_s(0, t) = \mathrm{e}^{-\theta t} \cdot l y^{1 - \alpha}(t) \geq \frac{l}{\mathrm{e}} \cdot y^{1 - \alpha}(t) \rightarrow +\infty \quad \text{as } t \nearrow T.
\end{align}
Similarly, we conclude that
\begin{align*}
\frac{1}{n} \cdot w(0, t)  \geq \frac{l}{\mathrm{e}} \cdot y^{1 - \beta}(t) \rightarrow +\infty \quad \text{as } t \nearrow T.
\end{align*}
Combining this with (\ref{us=0}) yields $T_{\max}\leq T<\infty$, which leads to a contradiction with the assumption $T_{\max}=\infty$.

\end{proof}


\section{Global boundedness}\label{global boundedness}
In this section, we are devoted to proving Theorem~\ref{thm1_2} by applying the method in \cite{2022-IUMJ-Winkler}. Using the well-known $W^{1,p}$ regularity theory \cite{1973-JMSJ-BrezisStrauss} to the second equation in (\ref{eq1.1.0}), we derive the following lemma.
\begin{lem}\label{nablav}
For all $k\in [1,\frac{n}{n-1})$ with $n \geq 2$, there exists a constant $C=C(k)>0$ such that
\begin{align*}
\|\nabla v(\cdot,t)\|_{L^{ k}(\Omega)} \leq C\|w(\cdot,t)\|_{L^1(\Omega)}, \quad t\in(0,T_{\max}).
\end{align*}
\end{lem}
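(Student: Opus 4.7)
The plan is to apply the standard $W^{1,k}$ regularity theory for the Poisson equation with Neumann boundary conditions to the second equation in \eqref{eq1.1.0}, namely $-\Delta v = \mu_w - w$ in $\Omega$ with $\partial_\nu v = 0$ on $\partial\Omega$ and the normalization $\int_\Omega v\,\mathrm{d}x = 0$. The right-hand side satisfies the compatibility condition $\int_\Omega(\mu_w - w)\,\mathrm{d}x = 0$ by definition of $\mu_w = f_\Omega w$, so the problem is solvable and admits a unique mean-zero solution.

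First, I would invoke the Brezis--Strauss type $W^{1,k}$ estimate (the result cited as \cite{1973-JMSJ-BrezisStrauss}), which asserts that for any $k \in [1, \frac{n}{n-1})$, the unique mean-zero weak solution $\varphi$ of the Neumann problem $-\Delta \varphi = h$ with $\int_\Omega h = 0$ satisfies
\begin{align*}
\|\nabla \varphi\|_{L^k(\Omega)} \leq C_1(k)\,\|h\|_{L^1(\Omega)},
\end{align*}
with $C_1(k)$ depending only on $k$ and $\Omega$. Applying this to $\varphi = v(\cdot,t)$ and $h = \mu_w(t) - w(\cdot,t)$ yields
\begin{align*}
\|\nabla v(\cdot,t)\|_{L^k(\Omega)} \leq C_1(k)\,\|\mu_w(t) - w(\cdot,t)\|_{L^1(\Omega)}.
\end{align*}

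Second, I would control the right-hand side by $\|w(\cdot,t)\|_{L^1(\Omega)}$. Since $|\mu_w(t)| = \tfrac{1}{|\Omega|}\bigl|\int_\Omega w(\cdot,t)\,\mathrm{d}x\bigr| \leq \tfrac{1}{|\Omega|}\|w(\cdot,t)\|_{L^1(\Omega)}$, the triangle inequality gives
\begin{align*}
\|\mu_w(t) - w(\cdot,t)\|_{L^1(\Omega)} \leq |\Omega|\cdot|\mu_w(t)| + \|w(\cdot,t)\|_{L^1(\Omega)} \leq 2\,\|w(\cdot,t)\|_{L^1(\Omega)}.
\end{align*}
Combining these two estimates and setting $C(k) := 2\,C_1(k)$ produces the desired bound. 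The only subtlety is verifying that the endpoint restriction $k < \tfrac{n}{n-1}$ is exactly the one arising from the Brezis--Strauss $L^1$-to-$W^{1,k}$ theory (equivalently, from Sobolev duality applied to the Green operator for the Neumann Laplacian); for $k$ in this range the estimate is classical, and this is the main technical fact one is leaning on rather than any genuinely new argument. Thus the proof is essentially a direct quotation of the regularity theorem with a trivial control of the nonlocal term $\mu_w$.
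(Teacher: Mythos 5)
Your proposal is correct and follows exactly the approach the paper intends: the paper simply cites the Br\'ezis--Strauss $W^{1,k}$ estimate for the Neumann problem and leaves the details implicit, and your write-up supplies precisely those details (applying the estimate to the mean-zero solution $v$ with source $w-\mu_w$ and bounding $\|w-\mu_w\|_{L^1(\Omega)}\le 2\|w\|_{L^1(\Omega)}$). The only minor slip is a harmless sign error in the rewritten equation (it should be $-\Delta v = w-\mu_w$), which has no effect on the $L^1$ norm of the right-hand side or on the conclusion.
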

\begin{proof}
[\textbf{Proof of Theorem \ref{thm1_2}}.] We need to consider two cases. 

\textbf{Case 1}. $q\in \mathbb{R}$ and $p>\frac{n-2}{n-1}$. Owing to $p>\frac{n-2}{n-1}$, we can infer that $n(1-p)<\frac{n}{n-1}$. Thus, we can fix $k \in [1,\frac{n}{n-1})$ such that $k>n(1-p)$, which guarantees that $\frac{1-p}{k}<\frac{1}{n}$. Thus, for $n\geq 2$, we can select $r>n$ such that
\begin{align}\label{r}
\frac{1-p}{k}<\frac{1}{r}<\frac{1}{n} \leq 1\text{.}
\end{align}
Due to the known smoothing properties of the Neumann heat semigroup $\left(e^{t \Delta}\right)_{t \geq 0}$ on $\Omega$ $($\cite{2016-DCDS-FujieItoWinklerYokota}$)$, we can find positive constants $\lambda$ and $c_{1}$ such that, for all $\varphi \in C^1\left(\bar{\Omega}\right)$ such that $\varphi \cdot \nu=0$ on $\partial \Omega$,
\begin{align}\label{heat}
\left\|e^{t \Delta} \nabla \cdot \varphi\right\|_{L^{\infty}(\Omega)} 
\leq c_1 t^{-\frac{1}{2}-\frac{n}{2 r}} e^{-\lambda t}\|\varphi\|_{L^r(\Omega)},
\quad t >0.
\end{align}
We employ a variation-of-constants representation associated with the first equation in (\ref{eq1.1.0}), along with (\ref{heat}) and the maximum principle, to see that
\begin{align}\label{uinfty}
&\|u(\cdot, t)\|_{L^{\infty}(\Omega)} \nonumber \\
& =\left\|\mathrm{e}^{t \Delta} u_0-\int_0^t \mathrm{e}^{(t-s) \Delta} \nabla \cdot\left\{u(\cdot, s)f\left(|\nabla v(\cdot, s)|^2\right) \nabla v(\cdot, s)\right\} \mathrm{d} s\right\|_{L^{\infty}(\Omega)} \nonumber \\
& \leq \left\|\mathrm{e}^{t \Delta} u_0\right\|_{L^{\infty}(\Omega)}
+c_2 \int_0^t\left\|\mathrm{e}^{(t-s) \Delta} \nabla \cdot\left\{u(\cdot, s)
f\left(|\nabla v(\cdot, s)|^2\right) \nabla v(\cdot, s)\right\}\right\|_{L^{\infty}(\Omega)} \mathrm{d}s \nonumber \\
& \leq \left\|u_0\right\|_{L^{\infty}(\Omega)}+c_1 c_2 \int_0^t(t-s)^{-\frac{1}{2}-\frac{n}{2 r}} \mathrm{e}^{-\lambda(t-s)}
\left\|u(\cdot, s)f\left(|\nabla v(\cdot, s)|^2\right) \nabla v(\cdot, s)\right\|_{L^r(\Omega)} \mathrm{d} s.
\end{align}
Writing $M(T):=\sup _{t \in(0, T)}\|u(\cdot, t)\|_{L^{\infty}(\Omega)}$ for any $T \in\left(0, T_{\max }\right)$. Without loss of generality, we assume that $M(T)>1$. For the case $p \geq 1$, using Hölder's inequality, along with (\ref{mass}), one can find a positive constant $c_3$ such that
\begin{align}\label{p>1}
\left\|u(\cdot, s)f\left(|\nabla v(\cdot, s)|^2\right) \nabla v(\cdot, s)\right\|_{L^r(\Omega)}  
=&\left\|u(\cdot, s)\left(1+|\nabla v(\cdot, s)|^2\right)^{-\frac{p}{2}} \nabla v(\cdot, s)\right\|_{L^r(\Omega)}  \nonumber \\
\leq & \|u(\cdot, s)\|_{L^r(\Omega)}  \nonumber \\
\leq & \|u(\cdot, s)\|_{L^{\infty}(\Omega)}^{a_1} \|u(\cdot, s)\|_{L^{1}(\Omega)}^{1-a_1} \nonumber \\
\leq & c_3 M^{a_1}(T)
\end{align}
with $a_1=1-\frac{1}{r} \in (0,1)$ by (\ref{r}).  
For the case $\frac{n-2}{n-1}<p<1$, using Lemma~\ref{nablav}, similar to (\ref{p>1}), we obtain
\begin{align}\label{p<1}
\left\|u(\cdot, s)f\left(|\nabla v(\cdot, s)|^2\right) \nabla v(\cdot, s)\right\|_{L^r(\Omega)}
\leq & \left\|u(\cdot, s)|\nabla v(\cdot, s)|^{1-p}\right\|_{L^r(\Omega)} \nonumber \\
\leq & \|u(\cdot, s)\|_{L^{\frac{rk}{k-r(1-p)}}(\Omega)} \|\nabla v(\cdot, s)\|_{L^{k}(\Omega)}^{1-p} \nonumber \\
\leq & \|u(\cdot, s)\|_{L^{\infty}(\Omega)}^{a_2} \|u(\cdot, s)\|_{L^{1}(\Omega)}^{1-a_2} \|\nabla v(\cdot, s)\|_{L^{k}(\Omega)}^{1-p} \nonumber \\
\leq & c_3 M^{a_2}(T),
\end{align}
where $a_2=1-\frac{1}{r}+\frac{1-p}{k} \in (0,1)$ by (\ref{r}) and $p<1$. Let $a=\max\{a_1,a_2\}<1$. Inserting (\ref{p>1}) and (\ref{p<1}) into (\ref{uinfty}), along with $r>n$, there exists a constant $c_4>0$ such that
\begin{align*}
\|u(\cdot, t)\|_{L^{\infty}(\Omega)} 
& \leq \left\|u_0\right\|_{L^{\infty}(\Omega)}+c_1c_2c_3 M^{a}(T) \int_0^t(t-s)^{-\frac{1}{2}-\frac{n}{2 r}} \mathrm{e}^{-\lambda(t-s)}\mathrm{d}s \nonumber \\
&\leq c_4+c_4 M^{a}(T) ,
\quad t \in (0,T).
\end{align*}
Therefore, we have $M(T) \leq c_4+c_4 M^a(T)$ for all $T \in\left(0, T_{\max }\right)$, which implies that $\|u(\cdot, t)\|_{L^{\infty}(\Omega)} $ $ \leq \max\{1,\left(2 c_4\right)^{\frac{1}{1-a}}\} $ for all $t \in\left(0, T_{\max }\right)$ by $a<1$. 

Based on the regularity results for linear elliptic equations, and applying them to the fourth equation in (\ref{eq1.1.0}), we can find positive constants $c_5$ and $c_6$ such that $$\|\nabla z(\cdot, t)\|_{L^{\infty}(\Omega)} \leq c_5 \|u(\cdot, t)\|_{L^{\infty}(\Omega)} \leq c_6, \quad t \in\left(0, T_{\max }\right).$$
Therefore, by $g\left(|\nabla z|^2\right)=(1+|\nabla z|^2)^{-\frac{q}{2}}\leq c_7$ for $q\in  \mathbb{R}$, we have
\begin{align*}
\left\|w(\cdot, s)g\left(|\nabla z|^2\right) \nabla z(\cdot, s)\right\|_{L^{\gamma}(\Omega)}
\leq c_7\|w(\cdot, s)\|_{L^{\gamma}(\Omega)} \|\nabla z(\cdot, s)\|_{L^{\infty}(\Omega)}
\leq c_6c_7 \|w(\cdot, s)\|_{L^{\gamma}(\Omega)}.
\end{align*}
Thus, again using the variation-of-constants representation and (\ref{heat}), for any $\gamma>n$, one can find constants $c_8, c_9>0$ such that
\begin{align*}
\|w(\cdot, t)\|_{L^{\infty}(\Omega)} 
& =\left\|\mathrm{e}^{t \Delta} w_0-\int_0^t \mathrm{e}^{(t-s) \Delta} \nabla \cdot\left\{w(\cdot, s)g\left(|\nabla z(\cdot, s)|^2\right) \nabla z(\cdot, s)\right\} \mathrm{d} s\right\|_{L^{\infty}(\Omega)} \nonumber \\
& \leq \left\|\mathrm{e}^{t \Delta} w_0\right\|_{L^{\infty}(\Omega)}
+c_8 \int_0^t\left\|\mathrm{e}^{(t-s) \Delta} \nabla \cdot\left\{w(\cdot, s)
g\left(|\nabla z(\cdot, s)|^2\right) \nabla z(\cdot, s)\right\}\right\|_{L^{\infty}(\Omega)} \mathrm{d} s \nonumber \\
& \leq \left\|w_0\right\|_{L^{\infty}(\Omega)}
+c_8 \int_0^t(t-s)^{-\frac{1}{2}-\frac{n}{2 \gamma}} \mathrm{e}^{-\lambda(t-s)}
\left\|w(\cdot, s)g\left(|\nabla z(\cdot, s)|^2\right) \nabla z(\cdot, s)\right\|_{L^{\gamma}(\Omega)} \mathrm{d}  s \nonumber \\
& \leq \left\|w_0\right\|_{L^{\infty}(\Omega)}
+c_6c_7c_8  \sup _{t \in(0, T)}\|w(\cdot, t)\|_{L^{\gamma}(\Omega)} 
\int_0^t(t-s)^{-\frac{1}{2}-\frac{n}{2 \gamma}} \mathrm{e}^{-\lambda(t-s)} \mathrm{d}  s \nonumber \\
& \leq \left\|w_0\right\|_{L^{\infty}(\Omega)}
+c_6 c_7c_8 \|w(\cdot, t)\|_{L^{1}(\Omega)}^{\frac{1}{\gamma}} \sup _{t \in(0, T)} \|w(\cdot, t)\|_{L^{\infty}(\Omega)}^{1-\frac{1}{\gamma}}   \int_0^t(t-s)^{-\frac{1}{2}-\frac{n}{2 \gamma}} \mathrm{e}^{-\lambda(t-s)} \mathrm{d}  s \nonumber \\
& \leq c_9+c_9 \sup _{t \in(0, T)} \|w(\cdot, t)\|_{L^{\infty}(\Omega)}^{1-\frac{1}{\gamma}},
\quad t\in (0,T).
\end{align*}
Similarly, we can obtain $\|w(\cdot, t)\|_{L^{\infty}(\Omega)} \leq \max \{1, (2c_9)^{\gamma}\}$ for all $t\in(0,T_{\max})$.

\textbf{Case 2}. $p\in \mathbb{R}$ and $q>\frac{n-2}{n-1}$. Due to the symmetry of system (\ref{eq1.1.0}), similar to the Case 1, we omit the proof.
\end{proof}
\vskip 3mm

\noindent\textbf{Data availability} The manuscript has no associated data.

\vskip 3mm
\noindent {\large\textbf{Declarations}}
\vskip 2mm 

\noindent\textbf{Conflict of interest} On behalf of all authors, the corresponding author states that there is no conflict of interest.

\noindent\textbf{Acknowledgments}

The authors thank the anonymous referees for their helpful comments and suggestions, which greatly improve the presentation of our paper. This paper is partially supported by National Natural Science Foundation of China (No. 12271092, No. 11671079) and the Jiangsu Provincial Scientific Research Center of Applied Mathematics (No. BK20233002).


\begin{thebibliography}{10}

\bibitem{2010-MMMAS-BellomoBellouquidNietoSoler}
{\sc N.~Bellomo, A.~Bellouquid, J.~Nieto, and J.~Soler}, {\em Multiscale
  biological tissue models and flux-limited chemotaxis for multicellular
  growing systems}, Math. Models Methods Appl. Sci., 20 (2010), pp.~1179--1207.

\bibitem{1973-JMSJ-BrezisStrauss}
{\sc H.~Br\'ezis and W.~A. Strauss}, {\em Semi-linear second-order elliptic
  equations in {$L\sp{1}$}}, J. Math. Soc. Japan, 25 (1973), pp.~565--590.

\bibitem{2012-JDE-CieslakStinner}
{\sc T.~Cie\'slak and C.~Stinner}, {\em Finite-time blowup and global-in-time
  unbounded solutions to a parabolic-parabolic quasilinear {K}eller-{S}egel
  system in higher dimensions}, J. Differential Equations, 252 (2012),
  pp.~5832--5851.

\bibitem{2014-AAM-CieslakStinner}
{\sc T.~Cie\'{s}lak and C.~Stinner}, {\em Finite-time blowup in a supercritical
  quasilinear parabolic-parabolic {K}eller-{S}egel system in dimension 2}, Acta
  Appl. Math., 129 (2014), pp.~135--146.

\bibitem{2015-JDE-CieslakStinner}
\leavevmode\vrule height 2pt depth -1.6pt width 23pt, {\em New critical
  exponents in a fully parabolic quasilinear {K}eller-{S}egel system and
  applications to volume filling models}, J. Differential Equations, 258
  (2015), pp.~2080--2113.

\bibitem{2016-DCDS-FujieItoWinklerYokota}
{\sc K.~Fujie, A.~Ito, M.~Winkler, and T.~Yokota}, {\em Stabilization in a
  chemotaxis model for tumor invasion}, Discrete Contin. Dyn. Syst., 36 (2016),
  pp.~151--169.

\bibitem{2001-EJAM-HorstmannWang}
{\sc D.~Horstmann and G.~Wang}, {\em Blow-up in a chemotaxis model without
  symmetry assumptions}, European J. Appl. Math., 12 (2001), pp.~159--177.

\bibitem{1992-TAMS-JaegerLuckhaus}
{\sc W.~J\"ager and S.~Luckhaus}, {\em On explosions of solutions to a system
  of partial differential equations modelling chemotaxis}, Trans. Amer. Math.
  Soc., 329 (1992), pp.~819--824.

\bibitem{1971-Jotb-KellerSegel}
{\sc E.~F. Keller and L.~A. Segel}, {\em Model for chemotaxis}, J. Theor. Biol,
  30 (1971), pp.~225--234.

\bibitem{1971-JTB-KellerSegel}
{\sc E.~F. Keller and L.~A. Segel}, {\em Traveling bands of chemotactic
  bacteria: a theoretical analysis}, J. Theor. Biol, 30 (1971), pp.~235--248.

\bibitem{2020-DCDSSS-Lankeit}
{\sc J.~Lankeit}, {\em Infinite time blow-up of many solutions to a general
  quasilinear parabolic-elliptic {K}eller-{S}egel system}, Discrete Contin.
  Dyn. Syst. Ser. S, 13 (2020), pp.~233--255.

\bibitem{2014--MizoguchiWinkler}
{\sc N.~Mizoguchi and M.~Winkler}, {\em Blow-up in the two-dimensional
  parabolic Keller-Segel system}, Preprint,  (2014).

\bibitem{1995-AMSA-Nagai}
{\sc T.~Nagai}, {\em Blow-up of radially symmetric solutions to a chemotaxis
  system}, Adv. Math. Sci. Appl., 5 (1995), pp.~581--601.

\bibitem{2001-JIA-Nagai}
\leavevmode\vrule height 2pt depth -1.6pt width 23pt, {\em Blowup of nonradial
  solutions to parabolic-elliptic systems modeling chemotaxis in
  two-dimensional domains}, J. Inequal. Appl., 6 (2001), pp.~37--55.

\bibitem{1997-FE-NagaiSenbaYoshida}
{\sc T.~Nagai, T.~Senba, and K.~Yoshida}, {\em Application of the
  {T}rudinger-{M}oser inequality to a parabolic system of chemotaxis},
  Funkcial. Ekvac., 40 (1997), pp.~411--433.

\bibitem{2018-JDE-NegreanuTello}
{\sc M.~Negreanu and J.~I. Tello}, {\em On a parabolic-elliptic system with
  gradient dependent chemotactic coefficient}, J. Differential Equations, 265
  (2018), pp.~733--751.

\bibitem{2002-CAMQ-PainterHillen}
{\sc K.~J. Painter and T.~Hillen}, {\em Volume-filling and quorum-sensing in
  models for chemosensitive movement}, Can. Appl. Math. Q., 10 (2002),
  pp.~501--543.

\bibitem{2020-RMI-PerthameVaucheletWang}
{\sc B.~Perthame, N.~Vauchelet, and Z.~Wang}, {\em The flux limited
  {K}eller-{S}egel system; properties and derivation from kinetic equations},
  Rev. Mat. Iberoam., 36 (2020), pp.~357--386.

\bibitem{2012-JDE-TaoWinkler}
{\sc Y.~Tao and M.~Winkler}, {\em Boundedness in a quasilinear
  parabolic-parabolic {K}eller-{S}egel system with subcritical sensitivity}, J.
  Differential Equations, 252 (2012), pp.~692--715.

\bibitem{2015-DCDSSB-TaoWinkler}
\leavevmode\vrule height 2pt depth -1.6pt width 23pt, {\em Boundedness vs.
  blow-up in a two-species chemotaxis system with two chemicals}, Discrete
  Contin. Dyn. Syst. Ser. B, 20 (2015), pp.~3165--3183.

\bibitem{2025-JDE-TaoWinkler}
\leavevmode\vrule height 2pt depth -1.6pt width 23pt, {\em A switch in
  dimension dependence of critical blow-up exponents in a {K}eller-{S}egel
  system involving indirect signal production}, J. Differential Equations, 423
  (2025), pp.~197--239.

\bibitem{2022-CPDE-Tello}
{\sc J.~I. Tello}, {\em Blow up of solutions for a {P}arabolic-{E}lliptic
  chemotaxis system with gradient dependent chemotactic coefficient}, Comm.
  Partial Differential Equations, 47 (2022), pp.~307--345.

\bibitem{2010-MMAS-Winkler}
{\sc M.~Winkler}, {\em Does a `volume-filling effect' always prevent
  chemotactic collapse?}, Math. Methods Appl. Sci., 33 (2010), pp.~12--24.

\bibitem{2019-JDE-Winkler}
\leavevmode\vrule height 2pt depth -1.6pt width 23pt, {\em Global classical
  solvability and generic infinite-time blow-up in quasilinear {K}eller-{S}egel
  systems with bounded sensitivities}, J. Differential Equations, 266 (2019),
  pp.~8034--8066.

\bibitem{2022-IUMJ-Winkler}
\leavevmode\vrule height 2pt depth -1.6pt width 23pt, {\em A critical blow-up
  exponent for flux limitation in a {K}eller-{S}egel system}, Indiana Univ.
  Math. J., 71 (2022), pp.~1437--1465.

\bibitem{2010-NA-WinklerDjie}
{\sc M.~Winkler and K.~C. Djie}, {\em Boundedness and finite-time collapse in a
  chemotaxis system with volume-filling effect}, Nonlinear Anal., 72 (2010),
  pp.~1044--1064.

\bibitem{2018-N-YuWangZheng}
{\sc H.~Yu, W.~Wang, and S.~Zheng}, {\em Criteria on global boundedness versus
  finite time blow-up to a two-species chemotaxis system with two chemicals},
  Nonlinearity, 31 (2018), pp.~502--514.

\bibitem{2024-NARWA-YuXueHuZhao}
{\sc H.~Yu, B.~Xue, Y.~Hu, and L.~Zhao}, {\em The critical mass curve and
  chemotactic collapse of a two-species chemotaxis system with two chemicals},
  Nonlinear Anal. Real World Appl., 78 (2024), Paper No. 104079, pp. 20.

\bibitem{2025--ZengLi-1}
{\sc Z.~Zeng and Y.~Li}, {\em Critical blow-up curve in a quasilinear
  two-species chemotaxis system with two chemicals}, Preprint,  (2025).

\bibitem{2025--ZengLi-2}
\leavevmode\vrule height 2pt depth -1.6pt width 23pt, {\em Critical blow-up
  lines in a two-species quasilinear chemotaxis system with two chemicals},
  Preprint,  (2025).

\bibitem{2024--Zhao}
{\sc Y.~Zhao}, {\em A critical nonlinearity for blow-up in a higher-dimensional
  chemotaxis system with indirect signal production}, Preprint,  (2024).

\bibitem{2017-TMNA-Zheng}
{\sc J.~Zheng}, {\em Boundedness in a two-species quasi-linear chemotaxis
  system with two chemicals}, Topol. Methods Nonlinear Anal., 49 (2017),
  pp.~463--480.

\bibitem{2022-MPRIA-Zhigun}
{\sc A.~Zhigun}, {\em Flux limitation mechanisms arising in multiscale
  modelling of cancer invasion}, Math. Proc. R. Ir. Acad., 122A (2022),
  pp.~5--26.

\bibitem{2021-JMAA-Zhong}
{\sc H.~Zhong}, {\em Boundedness in a quasilinear two-species chemotaxis system
  with two chemicals in higher dimensions}, J. Math. Anal. Appl., 500 (2021), ~Paper No. 125130, pp. 22.

\end{thebibliography}
\end{document}